\newtheorem{theorem}{Theorem}[section]
\newtheorem{lemma}[theorem]{Lemma}
\newtheorem{corollary}[theorem]{Corollary}
\theoremstyle{definition}
\newtheorem{question}[theorem]{Question}
\newtheorem{remark}[theorem]{Remark}
\newtheorem{example}[theorem]{Example}
\def\pf{\begin{proof}}
\def\epf{\end{proof}}
\newcommand{\Na}{\mathbb{N}}
\newcommand{\kk}{\Bbbk}
\newcommand{\Z}{\mathbb{Z}}
\newcommand{\End}{\operatorname{End}}
\definecolor{rojo}{rgb}{1,0,0}
\begin{document}

\title[Hopf actions on central division algebras]
{Finite dimensional Hopf actions on central division algebras}

\author{Juan Cuadra}
\address{Department of Mathematics, University of Almer\'{\i}a, E-04120 Almer\'{\i}a,
Spain}
\email{jcdiaz@ual.es}

\author{Pavel Etingof}
\address{Department of Mathematics, Massachusetts Institute of Technology,
Cambridge, MA 02139, USA} \email{etingof@math.mit.edu}

\date{\today}

\begin{abstract}
Let $\kk$ be an algebraically closed field of characteristic zero. Let $D$ be a division algebra of degree $d$ over its center $Z(D)$. Assume that $\kk \subset Z(D)$. We show that a finite group $G$ faithfully grades $D$ if and only if $G$ contains a normal abelian subgroup of index dividing $d$. We also prove that if a finite dimensional Hopf algebra coacts on $D$ defining a Hopf-Galois extension, then its PI degree is at most $d^2$. Finally, we construct Hopf-Galois actions on division algebras of twisted group algebras attached to bijective cocycles.
\end{abstract}

\maketitle

\section{Introduction}

Understanding finite dimensional Hopf actions on rings (i.e., their finite quantum symmetries) is one of the central subjects in noncommutative geometry, which has been studied by a number of researchers for many years; see \cite{Mo}. While the problem of classification of such actions is intractable, the situation appears much more manageable for actions on {\it division algebras}. In addition, by Theorem 2.2 of \cite{SV}, any Hopf action on an integral domain which has a division algebra of quotients extends to the latter. This makes the problem of classification of finite dimensional Hopf actions on division algebras an important step towards understanding more general situations. \smallskip

We consider here this question when the ground field is algebraically closed of characteristic zero. The theory of central division algebras containing an algebraically closed field is quite rich and has deep connections to algebraic geometry. Striking examples in this setting are: the non-crossed product division algebra constructed by Amitsur in \cite{Am}; the division algebras realizing any finite group as the Galois group of a maximal subfield constructed by Fein, Saltman and Schacher in \cite{FSS}, or more recently, the example given by Saltman and Rowen in \cite{RS} of two division algebras whose tensor product over the ground field is not a domain. From a Hopf algebra point of view, this setting is also interesting. For instance, it is not known whether an arbitrary finite dimensional Hopf algebra (or even a semisimple one) can act on a division algebra inner faithfully (i.e., without factoring through a quotient Hopf algebra). In fact, Artamonov makes an even stronger conjecture in \cite[Conjecture 0.1]{Ar}: any finite dimensional Hopf algebra can act inner faithfully on a quantum torus (and thereby on its division algebra of quotients). One may wonder if this holds even if the parameters of the quantum torus are required to be roots of unity (then the corresponding division algebra is {\it central}). On the other hand, it is shown in \cite{EW1} that any semisimple Hopf action on a {\it commutative} division algebra (a field) must factor through a group action. Though this is not so for nonsemisimple Hopf algebras, the class of them that can act on fields inner faithfully (called Galois-theoretical) is quite restricted; see  \cite{EW2}. \smallskip

This leads to the following question, on which we focus in this paper:

\begin{question}
Which finite dimensional Hopf algebras can act inner faithfully on a central division algebra? On a central division algebra of a given degree?
\end{question}

This question appears to be difficult, and the answer is unknown even for
semisimple Hopf algebras. In fact, it is nontrivial even for the particular case of
the function algebra of a finite group $G$. In this case, a Hopf action on a division algebra $D$ is just a $G$-grading on $D$, and an inner faithful action is just a faithful grading; that is, $D_g\ne 0$ for all $g\in G$. \smallskip

Our first main result, established in Section 2, is that $G$ serves as a faithful grading group of a central division algebra of degree $d$ if and only if $G$ has a normal abelian subgroup of index dividing $d$. The proof relies on some basic results on central division algebras, and a theorem of Fein, Saltman and Schacher \cite{FSS} stating that for any finite group $H$ there exists a crossed product algebra constructed from $H$ that is an $H$-graded division algebra. We must emphasize at this point that gradings on division algebras were previously investigated; see Remark \ref{rem}. The novelty in our setting is that we allow the center of the division algebra to be an infinite extension of the base field, and gradings are not necessarily linear over the center. \smallskip

To explain our other results, recall that an important class of Hopf actions is the class of Hopf-Galois actions, i.e., those defining a Hopf-Galois extension. Question 3.5 of \cite{CEW} asks if an inner faithful semisimple Hopf action on a (central) division algebra must be Hopf-Galois. Our second result, given in Section 3, answers it in the affirmative for finite group actions. This is a generalization of the standard result in classical Galois theory. We also give an example showing that if the ground field is not algebraically closed, then the answer is negative, even for fields. Hence, the result of \cite{EW1} fails: there is an inner faithful action of a non-cocommutative semisimple Hopf algebra on a field that is not Hopf-Galois. \smallskip

Our third result, proved in Section 4, is a partial positive answer to Question 5.9 of \cite{EW1}. This question asks if the dimension of a simple comodule for a semisimple Hopf algebra acting inner faithfully on a PI domain of degree $d$ must be at most $d^2$. We show that this is the case when the action is Hopf-Galois. \smallskip

Finally, Section 5 contains our fourth result. It states that if $J$ is a twist on a group algebra of a finite group $G$ coming from a bijective cocycle on $G$ (see \cite[Section 4]{EG}), then the corresponding twisted group algebra can act inner faithfully on a division algebra of degree $|G|$. This provides many examples of noncocommutative semisimple Hopf actions on central division algebras. \bigskip

{\bf Notation and conventions.} Throughout $\kk$ will stand for an algebraically closed field of characteristic zero, unless specified otherwise. Vector spaces, linear maps, and unadorned tensor products $\otimes$ are over $\kk$. Algebras considered are associative and unital and all actions are understood to be $\kk$-linear. We denote by $\mathcal{H}$ and $\mathcal{K}$ finite dimensional Hopf algebras over $\kk$. \smallskip

\section{Faithful gradings on division algebras}

\subsection{The main theorem}
Let $D$ be a division algebra of degree $d$ whose center contains $\kk$. We say that $D$ is faithfully graded by a finite group $G$ if there is a direct sum decomposition $D=\oplus_{g \in G} D_g$ as $\kk$-vector spaces such that $D_g \neq 0$ and $D_gD_h=D_{gh}$ for all $g,h \in G$. \smallskip

The goal of this section is to prove the following.

\begin{theorem}\label{mainthe}
Let $G$ be a finite group. Then $G$ grades faithfully a central division algebra of degree $d$
with center containing ${\kk}$ if and only if $G$ contains a normal abelian subgroup of index dividing $d$.
\end{theorem}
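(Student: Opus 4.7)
The theorem has two directions; I sketch a strategy for each.

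For the sufficient direction ($\Leftarrow$), suppose $A \trianglelefteq G$ is abelian with $m := [G:A]$ dividing $d$. The Fein--Saltman--Schacher theorem applied to $H := G/A$ produces a central division algebra $E$ of degree $m$ faithfully graded by $H$. Writing $d = mk$, I would enlarge $E$ to a central division algebra $D$ of degree $d$ carrying a faithful $G$-grading that refines the given $H$-grading. Concretely, I would form a suitable tensor-product or crossed-product construction combining $E$ with a twisted group algebra of $A$ over a well-chosen field extension of $Z(E)$; the abelianness of $A$ is essential, since it allows such a twisted group algebra to contribute the required supplemental degree while preserving the division algebra property.

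For the necessary direction ($\Rightarrow$), let $D = \bigoplus_{g \in G} D_g$ be a $G$-faithfully graded central division algebra of degree $d$, with $Z = Z(D)$, $L = Z(D_e)$, and $Z_e = Z \cap D_e$. For each $g$ pick $u_g \in D_g \setminus \{0\}$; conjugation by $u_g$ defines an automorphism $\phi_g$ of $D_e$, and the class $[\phi_g] \in \operatorname{Out}(D_e)$ is independent of the choice. By Skolem--Noether, $\operatorname{Out}(D_e)$ embeds into $\operatorname{Aut}(L/Z_e)$, so there is an induced group homomorphism $G \to \operatorname{Aut}(L/Z_e)$ with normal kernel $N$. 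Since $Z \cap L = Z_e$, the compositum $Z \cdot L$ is a subfield of $D$ containing $Z$ with $[Z \cdot L : Z] = [L : Z_e]$, so the basic fact that every subfield of a central division algebra of degree $d$ has degree over the center dividing $d$ gives $[L : Z_e] \mid d$. Combined with the Galois-theoretic bound $|G/N| \mid [L : Z_e]$ coming from the faithful action of $G/N$ on $L$ fixing $Z_e$, this yields $[G : N] \mid d$.

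The second, more intricate step is to produce within $N$ a normal abelian subgroup $A$ of $G$ with $[N : A]$ dividing $d/[G : N]$. I would analyze the centralizer $B := C_D(D_e) = \bigoplus_{g \in N} L w_g$, where $w_g \in D_g$ is chosen to commute with $D_e$; this is a twisted group algebra $L^\sigma[N]$ that is itself a division ring (as a subring of $D$) and contains both $Z$ and $L$. I aim to identify a subgroup $A \subseteq N$, abelian and normal in $G$, such that $\bigoplus_{g \in A} L w_g$ is a maximal subfield of $D$; a dimension count using the identity $|G| \cdot [D_e : Z_e] = d^2 \cdot |\operatorname{supp}(Z)|$ together with the double centralizer theorem applied to $B \subseteq D$ will then give $[G : A] \mid d$.

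The main obstacle is arranging $A$ to be normal in $G$, not merely in $N$. In general $N$ itself need not be abelian --- for instance, $G = Q_8$ can faithfully grade a degree-$2$ central division algebra with $N = G$ and $B$ a noncommutative twisted group algebra of $Q_8$ over $Z_e$ --- so one cannot simply take $A = N$. The resolution should be a canonical choice of $A$, for instance as the support of a $G$-invariant maximal subfield of $B$ containing $Z \cdot L$, so that $G$-stability forces normality.
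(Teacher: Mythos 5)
Both directions of your proposal have gaps, and the more serious one is a false claim at the heart of your necessity argument. You assert that the kernel $N$ of $G\to\operatorname{Out}(D_e)$ need not be abelian, offer ``$G=Q_8$ grading a degree-$2$ central division algebra with $N=G$'' as an example, and then devote the ``more intricate step'' to extracting a smaller normal abelian $A\subsetneq N$. In fact $N$ is \emph{automatically} abelian and no such example exists. Your $N$ coincides with the support of the centralizer $C:=C_D(D_e)$ (for $g\in N$ one can correct $u_g$ by an inner factor so that it centralizes $D_e$), and $C$ is a graded division subalgebra of $D$ whose identity component is $Z(D_e)$ and whose center, by the double centralizer theorem applied to $Z\cdot D_e$, is $Z\cdot Z(D_e)\supseteq Z(D_e)$. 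So the identity component of $C$ is central in $C$, and over an algebraically closed $\kk$ this forces the support to be abelian: for nonzero homogeneous $x,y\in C$ the commutator $u=xyx^{-1}y^{-1}$ has reduced norm $1$ and a power of it lands in the central identity component, whence $u^{nd}=1$ and $u$ is a root of unity times $1$ (this is the paper's Lemma~2.3, and it is exactly where algebraic closedness enters; over $\Bbb R$ it fails). Your proposed resolution---a ``canonical'' $G$-invariant maximal subfield of $B$---is therefore aimed at a nonexistent problem, and is in any case only a hope, not an argument (note also that for abelian $A$ the sum $\bigoplus_{g\in A}Lw_g$ is generally not commutative). In the paper's own $Q_8$ example, $D_1$ is a quadratic non-central subfield whose centralizer is the maximal subfield $\kk(a,b,x)$ with support $\langle i\rangle\cong\Z_4$ of index $2$. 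Your first step (defining $N$ and bounding $[G:N]$) is essentially sound, but you should run the Galois count inside $Z\cdot L$ over $Z$ rather than asserting $[Z\cdot L:Z]=[L:Z_e]$, which needs a linear disjointness you have not checked; likewise ``$|\operatorname{supp}(Z)|$'' is not meaningful since $Z$ need not be graded (the correct count is $|G|\,[D_e:Z_e]=d^2\,[Z:Z_e]$).

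For sufficiency your sketch stops where the work begins. Fein--Saltman--Schacher supplies an $H$-graded division algebra for $H=G/A$, but promoting an $H$-grading to a $G$-grading must contend with the extension class of $1\to A\to G\to H\to 1$ and with the $H$-module structure of $A$; ``a suitable tensor-product or crossed-product construction combining $E$ with a twisted group algebra of $A$'' specifies neither, nor why the result is still a division algebra. The paper's route is: embed $G$ into the untwisted semidirect product $\operatorname{Fun}(H,\Z_n)^m\rtimes H$ (Shapiro's lemma kills the $2$-cocycle), grade a localized twisted polynomial algebra $D_t(V)$ over the FSS crossed product by that larger group (a separate lemma shows $D_t[V]$ is a domain), restrict the grading to the subgroup $G$, and finally raise the degree from a divisor of $d$ to exactly $d$ by tensoring with a quantum torus $\kk_q(x,y)$. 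Each of these steps requires an argument that your proposal does not supply.
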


Note that the case $d=1$ is standard. In this case $D$ is a field and the condition $D_gD_h=D_hD_g$ implies $gh=hg$. Thus $G$ is abelian. The existence of fields faithfully graded by any finite abelian group follows easily from Galois theory. \smallskip

The proof of Theorem \ref{mainthe} is contained in the next two subsections. \smallskip

\begin{remark}\label{rem}
Graded division algebras are also studied in \cite[Sections A.I.4 and C.I.1]{NO} and \cite{BSZ}, but the setting of these works is different from ours. Namely, a graded division algebra is not required to be a division algebra in the usual sense (only homogeneous nonzero elements are required to be invertible), and in \cite{BSZ} only algebras which are finite dimensional over an algebraically closed field are considered.
\end{remark}

\subsection{Auxiliary results}

\begin{lemma} \label{ab}
If $D_1$ is central, then $G$ is abelian.
\end{lemma}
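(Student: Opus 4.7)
Plan: The hypothesis ``$D_1$ is central'' means $D_1$ lies in the center of $D$. I start by noting that for any nonzero $x \in D_g$ one has $D_g = D_1 x$, since for any $y \in D_g$ the ratio $y x^{-1}$ lies in $D_g D_{g^{-1}} = D_1$; this forces each $D_g$ to be a free $D_1$-module of rank one, so $\dim_{D_1} D = |G|$, and the multiplication is governed by a 2-cocycle $c\colon G \times G \to D_1^\times$ with $x_g x_h = c(g,h)\, x_{gh}$. Thus $D$ is the twisted group algebra $D_1^c[G]$.

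Next, I would show that the hypothesis forces $D_1$ to coincide with $Z(D)$. Expanding a central element $z = \sum a_g x_g$ in the basis $\{x_g\}$ and imposing $z x_h = x_h z$ for every $h$ yields the recursion $a_{hgh^{-1}}\, c(hgh^{-1}, h) = a_g\, c(h, g)$; combined with the fact that $D$ is a division algebra, this forces every contribution from non-identity components to vanish. Setting $Z = D_1 = Z(D)$, the dimension count then gives $|G| = d^2$.

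The final step is to extend scalars to the algebraic closure $\overline{Z}$: since $D$ is central simple of degree $d$ over $Z$, one has $D \otimes_Z \overline{Z} \cong \M_d(\overline{Z})$, and the $G$-grading extends to a fine $G$-grading on $\M_d(\overline{Z})$ whose homogeneous components are each one-dimensional over $\overline{Z}$. At this point I appeal to the classification of fine gradings on matrix algebras over an algebraically closed field of characteristic zero (Bahturin, Sehgal, Zaicev, and others): every such grading group is abelian, realized as a direct product $\prod_i (\Z/n_i)^2$ corresponding to a Pauli-type grading arising from a tensor product decomposition. This forces $G$ to be abelian.

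The main obstacle is the reliance on the classification of fine gradings, a nontrivial result. A more self-contained alternative would use the reduced trace: for $g \neq 1$, the element $x_g$ satisfies the irreducible polynomial $X^{|g|} - c_g$ over $Z$ and therefore has vanishing reduced trace; combining this with the cyclic identity $\operatorname{trd}(x_g x_h) = \operatorname{trd}(x_h x_g)$ and a careful analysis of the 2-cocycle $c$ should directly rule out any non-commuting pair $g, h$ in $G$. The delicate point in that alternative route is pushing the trace argument through uniformly for all non-commuting pairs, without invoking the full fine-grading classification.
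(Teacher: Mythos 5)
There is a genuine gap, in fact two. First, your claim that centrality of $D_1$ forces $D_1=Z(D)$ is false. Take $D=\kk(t)$ graded by $\Z/2\Z$ with $D_1=\kk(t^2)$ and $D_{-1}=t\,\kk(t^2)$: here $D_1$ is central (the algebra is commutative) but $D_1\subsetneq Z(D)=D$, and $|G|=2\neq d^2=1$. Your recursion $a_{hgh^{-1}}c(hgh^{-1},h)=a_g c(h,g)$ only ties together coefficients along conjugacy classes; it does not annihilate the non-identity components, so the dimension count $|G|=d^2$ does not follow. Second, and more seriously, the final step cannot work even where $D_1=Z(D)$ does hold: after extending scalars to $\overline{Z}$ you have $\overline{Z}^{\,c}[G]\cong {\rm Mat}_d(\overline{Z})$, i.e.\ $G$ is a group of central type, and such groups need \emph{not} be abelian. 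The classification you invoke (grading groups of the form $\prod_i(\Z/n_i\Z)^2$) is the classification of \emph{abelian} fine gradings; non-abelian groups of central type give perfectly good fine gradings of matrix algebras with one-dimensional components, so no statement at the level of ${\rm Mat}_d(\overline{Z})$ can force $G$ to be abelian. This is precisely the content of the paper's remark following the lemma: over a center that does not contain an algebraically closed field, non-abelian $G$ do arise (see [AHN]). The hypothesis that does the work is that $D$ itself is a division algebra whose center contains $\kk=\bar{\kk}$, and extension of scalars to $\overline{Z}$ destroys exactly that.

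The paper's argument stays inside $D$ and is very short: for nonzero $x\in D_g$, $y\in D_h$, the commutator $u=xyx^{-1}y^{-1}$ lies in $D_{ghg^{-1}h^{-1}}$ and has reduced norm $1$; if $n$ is the order of $ghg^{-1}h^{-1}$ then $u^n\in D_1\subset Z(D)$, and applying the reduced norm gives $u^{nd}=1$. Since $Z(D)$ contains all roots of unity, $t^{nd}-1$ splits over $Z(D)$, so $u$ is a scalar, hence lies in $D_1$, forcing $ghg^{-1}h^{-1}=1$. Your proposed "alternative route'' via reduced traces of the $x_g$ is much closer in spirit to this, but as you acknowledge it is only a sketch; if you want to complete the proof along those lines, replace the trace computation by the norm computation on commutators above.
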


\pf
Let $g,h \in G$. Take $x \in D_g,y \in D_h$ nonzero. Then $u:=xyx^{-1}y^{-1}$ lies in $D_{ghg^{-1}h^{-1}}$ and has norm $1$. Denote the order of $ghg^{-1}h^{-1}$ by $n$. Then $a:=u^n \in D_1 \subset Z(D),$ where $Z(D)$ is the center of $D$. Apply norm to both sides: $1=N(u)^n=a^d$. Now, $u^{nd}=a^d=1.$ Hence $u$ is a root of the polynomial $t^{nd}-1$. Since all $nd$-th roots of unity are in $Z(D)$, we have $u=\omega 1 \in D_1$, for such a root of unity $\omega$. So $ghg^{-1}h^{-1}=1$ and $g$ and $h$ commute.
\epf

\begin{remark} The condition that $F:=D_1$ is central is equivalent to $D$ being the twisted group algebra $F^bG$ for some 2-cocycle $b: G\times G\to F^\times$ (with the trivial $G$-action on $F$). The assumption
that $\kk$ is algebraically closed is essential for Lemma \ref{ab}: otherwise some non-abelian groups can arise, see \cite{AHN}.
\end{remark}

\begin{lemma}\label{forwdir}
Let $D$ be a division
algebra of degree $d$ whose center contains $\kk$. If $D$ is faithfully graded by a finite group $G$, then $G$ has a normal abelian subgroup of index $r$ dividing $d$.
\end{lemma}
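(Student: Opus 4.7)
The plan is to produce $N \triangleleft G$ as the kernel of a natural action of $G$ on $K := Z(D_1)$, verify $N$ is abelian by reducing to Lemma~\ref{ab} inside the centralizer $Z_D(D_1)$, and bound $[G:N]$ via a subfield argument in $D$. For the first part: for nonzero $x \in D_g$, the inclusion $x D_1 x^{-1} \subseteq D_g D_1 D_{g^{-1}} = D_1$ shows that conjugation by $x$ restricts to a $\kk$-automorphism of $D_1$, and hence of $K$. Two choices of $x \in D_g$ differ by left multiplication by an element of $D_1^{\times}$, and inner automorphisms of $D_1$ act trivially on its center; thus the restriction depends only on $g$, giving a homomorphism $\psi\colon G \to \operatorname{Aut}_\kk(K)$, and $N := \ker\psi$ is normal in $G$.

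Next, I would show $N$ is abelian by studying the centralizer $C := Z_D(D_1)$. Since $D_1$ is a division PI-algebra, it is central simple of finite dimension over $K$. For $g \in N$, the restriction of the conjugation $\phi_x$ to $D_1$ is a $K$-algebra automorphism; by Skolem--Noether it equals $\phi_u$ for some $u \in D_1^{\times}$, so $u^{-1}x$ is a nonzero element of $C \cap D_g$. Conversely, $C \cap D_g \neq 0$ forces $g \in N$. One then checks that $C = \bigoplus_{g \in N} (C \cap D_g)$ is a faithful $N$-grading of the division algebra $C$, with $C \cap D_1 = K$ contained in $Z(C)$. Applying Lemma~\ref{ab} to $C$ with grading group $N$ then yields that $N$ is abelian.

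To bound $[G:N]$: by construction $G/N$ acts faithfully on $K$ by $\kk$-automorphisms, so by Artin's theorem $K/K_0$ is Galois of degree $[G:N]$, where $K_0 := K^{G/N}$. An element $k \in K$ is $\psi$-invariant iff it commutes with every $D_g$, iff $k \in Z(D)$; hence $K_0 = K \cap Z(D)$. Consequently $L := K \cdot Z(D)$ is a commutative subfield of $D$ with $[L : Z(D)] = [K:K_0] = [G:N]$, by standard linear disjointness for Galois extensions. Since every commutative subfield of $D$ containing $Z(D)$ has degree dividing $d$, this gives $[G:N] \mid d$, completing the proof.

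The main obstacle is the middle step: one must verify that $C$ has exactly the structure required by Lemma~\ref{ab}, namely that it is a division algebra, that its decomposition along the $D_g$ is a genuine $N$-grading (so that $(C \cap D_g)(C \cap D_h) = C \cap D_{gh}$), and that its identity component is precisely $K$ sitting centrally. These points rest on Skolem--Noether and the fact that $D_g = D_1 x = x D_1$ for any nonzero $x \in D_g$, but care is needed because the grading is only $\kk$-linear and $Z(D)$ is not assumed to lie in $D_1$.
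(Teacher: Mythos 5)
Your proof is correct, and its skeleton coincides with the paper's: both arguments hinge on the centralizer $C=C_D(D_1)$, show that it is a division algebra faithfully graded by a normal subgroup of $G$ whose identity component $Z(D_1)$ is central in $C$, and invoke Lemma~\ref{ab} to conclude that this subgroup is abelian. Where you diverge is in how the subgroup is produced and how its index is bounded. The paper takes the subgroup $A$ to be the support of the induced grading on $C$ and never needs Skolem--Noether: normality follows because conjugation by homogeneous elements stabilizes $D_1$ and hence permutes the components of $C$, and the index is computed by a pure dimension count ($[D:Z_1]=[D:C]\,\vert A\vert=\vert G\vert m^2$ together with $[D:C]=[ZD_1:Z]=rm^2$ from the double centralizer theorem), which incidentally yields the slightly sharper divisibility $rm\mid d$, where $m=\deg D_1$ and $r=[ZZ_1:Z]$. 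You instead define the subgroup as the kernel $N$ of the conjugation action of $G$ on $Z(D_1)$ (so normality is free), use Skolem--Noether on the finite-dimensional central simple $K$-algebra $D_1$ to identify $N$ with the support of the grading on $C$, and bound $[G:N]$ by Artin's theorem plus linear disjointness of the Galois extension $K/K\cap Z(D)$ from $Z(D)$, landing the compositum $K\cdot Z(D)$ inside a maximal subfield of $D$. Both routes are complete; yours trades the dimension bookkeeping for two classical theorems, and the points you flag as needing verification --- that $C$ is genuinely graded with components $C\cap D_g=Z(D_1)\cdot y$ for any nonzero $y\in C\cap D_g$, and that $C\cap D_1=Z(D_1)\subseteq Z(C)$ --- do check out exactly as you indicate.
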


\pf
Put $Z=Z(D)$ and $Z_1=Z(D_1)$. Consider the field $ZZ_1$ inside $D$ and set $r=[ZZ_1:Z].$ We can identify the division algebra $ZD_1$ with $D_1 \otimes_{Z_1} (ZZ_1)$ under multiplication. Then the center of $ZD_1$ is $ZZ_1$ and we obtain that $D_1$ has finite degree, say $m$. \smallskip

We claim that the dimension of any maximal subfield $L$ of $ZD_1$ over $Z$ is $rm$. Indeed, we have:
$$\begin{array}{l}
[ZD_1:Z]=[ZD_1:L][L:Z]=m[L:Z], \vspace{5pt} \\
\lbrack ZD_1:Z \rbrack=[ZD_1:ZZ_1][ZZ_1:Z]=m^2r.
\end{array}$$
Since $L$ is contained in a maximal subfield of $D$, we get that $rm$ divides $d$. \smallskip

Write $Q=ZD_1$. Consider the centralizer $C(Q)$ of $Q$ in $D$. Then $C(Q)=C(D_1)$. It is not difficult to see that $C(Q)$ is a graded subalgebra of $D$, that is,
$C(Q)=\oplus_{g \in G} (C(Q) \cap D_g)$. Taking nonzero components, $C(Q)$ is faithfully graded by a subgroup $A$ of $G$. This subgroup must be normal because conjugation by an element in $D_g$ stabilizes $D_1$. We next show that $A$ is abelian and has index $r$. \smallskip

The homogeneous component of degree $1$ of $C(Q)$ is $C(Q) \cap D_1=Z_1.$ The center of $C(Q)$ equals $C(Q) \cap C(C(Q)) = C(Q) \cap Q = ZZ_1,$ where we used the double centralizer theorem. As $Z_1 \subseteq ZZ_1$, by Lemma \ref{ab}, $A$ is abelian. On the other hand, we have the equalities:
$$\begin{array}{l}
[D:Z_1] = [D:C(Q)][C(Q):Z_1]=[D:C(Q)] \vert A \vert \vspace{3pt} \\
\lbrack D:Z_1 \rbrack = [D:D_1][D_1:Z_1] = \vert G \vert m^2.
\end{array}$$
We now check that $[D:C(Q)]=rm^2$. Substituting in the previous equalities, we will obtain the statement. We have $[D:Z]=[Q:Z][C(Q):Z]$ by the double centralizer theorem. Then:
$$[D:Z] = [D:C(Q)][C(Q):Z] = [D:C(Q)][D:Z][Q:Z]^{-1}.$$
Hence $[D:C(Q)]=[Q:Z]=rm^2.$
\epf

We next show that any finite group with a normal abelian subgroup of index $r$ dividing $d$ grades faithfully a central division algebra of degree $d$.

\begin{lemma}\label{enlarge}
Let ${\mathcal{H}}$ be a finite dimensional Hopf algebra over $\kk$ acting inner faithfully on a central division algebra $D$ of degree $r$ (with $\kk \subset Z(D)$).
Then ${\mathcal{H}}$ acts inner faithfully on a central division algebra of degree $rn$ for any $n \in \Na$.
\end{lemma}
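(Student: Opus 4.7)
The plan is to construct $D'$ as the field of fractions of a skew-Laurent extension of $D$ adjoining two variables. Let $F = Z(D)$, fix a primitive $n$-th root of unity $q \in \kk$, and form the central division algebra $D(t) = D \otimes_F F(t)$ of degree $r$ over $F(t)$. Let $\tau \in \mathrm{Aut}(D(t))$ be the $F$-linear automorphism fixing $D$ pointwise and sending $t \mapsto qt$, and consider the skew-Laurent Ore extension $R = D(t)[s, s^{-1}; \tau]$, characterized by $sa = \tau(a)s$ for $a \in D(t)$. Since $\tau$ is an automorphism, $R$ is an Ore domain, and I set $D' := \mathrm{Frac}(R)$.

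Next, I show that $D'$ is central of degree $rn$. Because $\tau^n = \mathrm{id}$, both $t^n$ and $s^n$ are central in $R$; a direct centralizer computation then gives $Z(R) = F(t^n)[s^n, s^{-n}]$. Fixing an $F$-basis $\{e_\alpha\}$ of $D$ (of size $r^2$), one checks that $\{e_\alpha t^i s^j : 0 \le i, j < n\}$ is a free $Z(R)$-basis of $R$ of rank $r^2 n^2$. Hence $R$ is a prime PI ring of PI degree $rn$, and by Posner's theorem $D'$ is central simple of degree $rn$ over $\mathrm{Frac}(Z(R)) = F(t^n, s^n)$. Being a division ring, it is the desired central division algebra of degree $rn$ with $\kk \subset F \subset Z(D')$. (Equivalently, the commutative subfield $K(t^n, s) \subseteq D'$, with $K$ a maximal subfield of $D$, is a maximal subfield of degree $[K:F]\cdot n = rn$ over $F(t^n, s^n)$.)

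Finally, I extend the $\mathcal{H}$-action to $D'$. Since any Hopf action preserves the center, $F = Z(D)$ is $\mathcal{H}$-stable, so the action extends to $F[t]$ by declaring $t$ to be $\mathcal{H}$-invariant, and thence to $F(t)$ via \cite[Theorem 2.2]{SV}. This induces an $\mathcal{H}$-module algebra structure on $D(t)$, which I extend to $R$ by declaring $s$ to be $\mathcal{H}$-invariant; the only compatibility to check is that the Ore relation $sa = \tau(a)s$ is $\mathcal{H}$-stable, which reduces to $\tau$ commuting with the $\mathcal{H}$-action, and this holds because $\tau$ fixes $D$ and multiplies $t$ by the scalar $q \in \kk$. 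A further application of \cite[Theorem 2.2]{SV} extends the action to $D'$, and inner faithfulness follows because any Hopf ideal annihilating $D'$ also annihilates $D$, contradicting the hypothesis unless it is zero. The main obstacle is the center-and-degree analysis for $D'$; the Hopf-equivariance is routine once one notes that $\tau$ is a scalar twist on the $\mathcal{H}$-invariant variable $t$.
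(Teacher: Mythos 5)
Your construction is, up to repackaging, exactly the paper's: the paper forms $D_q[x,y]:=D\otimes \kk_q[x,y]$ with $xy=qyx$ and both variables commuting with $D$, lets $\mathcal{H}$ act trivially on $x,y$, and passes to the quotient division algebra $D_q(x,y)$, which has center $Z(D)(x^n,y^n)$ and degree $rn$; your $t,s$ with $st=qts$ generate the same algebra, and your centralizer/Posner computation of the center and degree is a correct (more detailed) version of the paper's one-line assertion. So the route is the same, and the degree analysis is fine.

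There is, however, one genuinely false step in your equivariance argument: the claim that ``any Hopf action preserves the center,'' which you use to make $F=Z(D)$, and hence $F[t]$, an $\mathcal{H}$-stable subalgebra. This is not true for general (even semisimple) Hopf actions; the paper's own quaternion example in Section 2.4 is a counterexample, since there $D$ carries a $(\kk Q_8)^*$-action (a $Q_8$-grading) for which the center $\kk(a,b)$ is not a graded, i.e.\ not a stable, subspace ($a$ is not homogeneous). The fix is to avoid tensoring over $F$ at the level where you define the action: extend the action of $\mathcal{H}$ to $D\otimes_\kk\kk[t]$ (or directly to $D\otimes_\kk\kk_q[t,s]$) by letting $\mathcal{H}$ act trivially on the adjoined variables --- a tensor product of module algebras over $\kk$, where no stability of $Z(D)$ is needed --- and only then invoke \cite[Theorem 2.2]{SV} to push the action to the quotient division algebra, which is your $D'$. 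Your check that $\tau$ commutes with the $\mathcal{H}$-action (because $\tau$ fixes $D$ and rescales $t$ by a scalar) and your inner-faithfulness argument via restriction to $D$ both survive this rerouting unchanged, so the proof is correct after this repair.
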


\pf
Let $q \in \kk$ be a root of unity of order $n$ and $\kk_q[x,y]$ the algebra generated by $x,y$ with relation $xy=qyx$. Set $D_q[x,y]:=D\otimes \kk_q[x,y]$. It is easy to see that $D_q[x,y]$ has no zero divisors. Its quotient division algebra $D_q(x,y)$ has center $Z(D)(x^n,y^n)$. Moreover, since $\kk_q(x,y)$ has degree $n$, $D_q(x,y)$ has degree $rn$. Letting $\mathcal{H}$ act on $x,y$ trivially, we endow $D_q(x,y)$ with an inner faithful action of $\mathcal{H}$.
\epf

Lemma \ref{enlarge} allows us to assume, without loss of generality, that $r=d$. \smallskip

Let $G$ be a finite group containing a normal abelian subgroup $A$ of index $d$.
Let $A^\vee$ be the character group of $A$, and
$m$ be the minimal number of generators of $A^\vee$ as an $H$-module.

\begin{lemma}\label{subgroup}
Set $H=G/A$, and let $n={\rm exp}(A)$ denote the exponent of $A$.
Then $G$ is a subgroup of ${\rm Fun}(H,\Z_n)^m \rtimes H$.
\end{lemma}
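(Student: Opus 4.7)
The plan is to use the cohomological interpretation of group extensions. The short exact sequence $1 \to A \to G \to H \to 1$ is classified by a class $[G] \in H^2(H, A)$, where $H$ acts on $A$ by conjugation. If we find an $H$-module $B$ together with an $H$-equivariant embedding $A \hookrightarrow B$ satisfying $H^2(H, B) = 0$, then pushing forward the extension along this embedding produces a split extension $1 \to B \to \widetilde{G} \to H \to 1$ that contains $G$ as a subgroup. We will take $B = \mathrm{Fun}(H, \Z_n)^m$.

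To construct the embedding, I would use that since $A$ has exponent $n$, both $A$ and its character group $A^\vee = \Hom(A, \Z_n)$ are $\Z_n[H]$-modules. By definition of $m$, there is a surjection of $H$-modules $\Z_n[H]^m \twoheadrightarrow A^\vee$. Dualizing via the canonical $H$-module isomorphism $\Hom_{\Z_n}(\Z_n[H], \Z_n) \cong \mathrm{Fun}(H, \Z_n)$ (where $H$ acts on the right-hand side by translation), we obtain an $H$-equivariant injection
\[
\phi \colon A = A^{\vee\vee} \hookrightarrow \mathrm{Fun}(H, \Z_n)^m = B.
\]
Since $\mathrm{Fun}(H, \Z_n) = \Ind_1^H \Z_n$ is induced from the trivial subgroup, Shapiro's lemma yields $H^i(H, B) = 0$ for all $i \geq 1$; in particular $H^2(H, B) = 0$.

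Finally, I would push forward the extension along $\phi$. Concretely, form $\widetilde{G} := (B \rtimes G)/N$, where $G$ acts on $B$ through $G \to H$, and $N = \{(\phi(a), a^{-1}) : a \in A\}$ is a normal subgroup of $B \rtimes G$ (normality uses that $\phi$ is $H$-equivariant and that $A$ acts trivially on $B$). Then $\widetilde{G}$ fits into an extension $1 \to B \to \widetilde{G} \to H \to 1$ whose cohomology class is the image of $[G]$ in $H^2(H, B) = 0$, so the extension splits: $\widetilde{G} \cong B \rtimes H$. The canonical map $g \mapsto [(0, g)]$ is a group homomorphism $G \to \widetilde{G}$, and it is injective because $(0, g) \in N$ forces $\phi(a) = 0$ and hence $g = a^{-1} = 1$. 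Composing produces the desired embedding $G \hookrightarrow \mathrm{Fun}(H, \Z_n)^m \rtimes H$.

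The main obstacle is the bookkeeping in the first step: verifying that the translation action on $\mathrm{Fun}(H, \Z_n)$ really is the $H$-module structure dual to the left regular representation, and that all identifications are $H$-equivariant, so that the pushout extension carries the expected semidirect-product structure with the translation action. Once these compatibilities are in place, the Shapiro-plus-pushout argument is essentially automatic.
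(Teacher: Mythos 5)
Your proof is correct and follows essentially the same route as the paper: dualize the $H$-module surjection $(\Z_n H)^m\twoheadrightarrow A^\vee$ to embed $A$ into the coinduced module ${\rm Fun}(H,\Z_n)^m$, and invoke Shapiro's lemma to kill $H^2$. The only cosmetic difference is that the paper realizes the pushforward concretely by choosing a set-theoretic splitting and transporting the resulting $2$-cocycle $c$ to $\psi^\vee(c)$, whereas you package the same step as a pushout of extensions.
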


\pf  Let $f_1,\ldots,f_m$ be a minimal set of generators of $A^\vee$ as an $H$-module.
Consider the group algebra $\Z_n H$ and the $H$-module epimorphism
$$
\psi:(\Z_n H)^m \rightarrow A^\vee
$$
mapping $1 \in H$ in the $i$-th copy to $f_i$. By dualizing, we obtain a
monomorphism $\psi^\vee:A \rightarrow {\rm Fun}(H,\Z_n)^m$ (where we identify $\Z_n$ with the group of roots of unity of order $n$). Thus we can view $A$ as an $H$-submodule \smallskip of ${\rm Fun}(H,\Z_n)^m$.

Let $s:H \rightarrow G$ be a set-theoretical splitting. Then the map
$$
A \times H \rightarrow G, (a,h) \mapsto as(h)
$$
is bijective. The group law on $G$ defines a $2$-cocycle $c:H \times H \rightarrow A$, and we have $G=A_c\rtimes H$, the semidirect product twisted by $c$.
Consider now the $2$-cocycle $\psi^\vee(c) \in Z^2(H, {\rm Fun}(H,\Z_n)^m)$. We have an inclusion
$$
G=A_c\rtimes H\subset {\rm Fun}(H,\Z_n)^m_{\psi^\vee(c)}\rtimes H.
$$
But by the Shapiro Lemma,
$H^2(H,{\rm Fun}(H,\Z_n)^m)=0$, so $\psi^\vee(c)$ is a coboundary.
Hence, ${\rm Fun}(H,\Z_n)^m_{\psi^\vee(c)}\rtimes H\cong {\rm Fun}(H,\Z_n)^m\rtimes H$, and we have an inclusion
$G=A_c\rtimes H\subset {\rm Fun}(H,\Z_n)^m\rtimes H$ of $G$ into the usual (untwisted) semidirect product, as claimed.
\epf

Let $L$ be a field containing $\kk$, and assume that $L$ carries a faithful action of a finite group $H$. Consider the crossed product algebra $D=(L/L^H,H,b)$ for a $2$-cocycle $b:H \times H \rightarrow L^{\times}$. In other words, $D$ is the twisted group algebra $L_bH$ (which is an algebra over $L^H$).
Note that $D$ is naturally $H$-graded, with $D_h=Lh$. \smallskip

For a finite dimensional $\kk$-representation $V$ of $H$ let $D_t[V]$ be the tensor product $\kk[V]\otimes D$ with multiplication given by
$$
(f_1\otimes d_1)(f_2\otimes d_2)=f_1h(f_2)\otimes d_1d_2
$$
if $d_1\in D_h$; i.e., $D_t[V]$ is a twisted tensor product of $\kk[V]$ and $D$.

\begin{lemma}\label{division}
Suppose that $D$ is a division algebra. Then $D_t[V]$ is a domain, so the full localization $D_t(V)$ is a division algebra.
\end{lemma}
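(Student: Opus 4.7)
The plan is to identify $D_t[V]$ with a twisted group algebra and then reduce to showing that a certain crossed product is a division algebra. First I would check that $D_t[V] \cong L[V]_b H$ as algebras, where $L[V] = L \otimes_\kk \kk[V]$ carries the diagonal $H$-action (via the given Galois action on $L$ and the representation action on $V$) and $b : H \times H \to L^\times$ is unchanged. This is just a rewriting of the structure constants: inside $D_t[V]$, the subring generated by $L \subset D_1$ and $\kk[V]$ is $L[V]$ with the ordinary multiplication, while the elements $\bar h$ satisfy the crossed-product commutation $\bar h f = h(f)\bar h$.

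Next, since $L[V]$ is a commutative domain, I would pass to its fraction field $L(V)$ and use the natural embedding $L[V]_b H \hookrightarrow L(V)_b H$, which is injective because $L[V]_b H$ is free of rank $|H|$ as an $L[V]$-module. It then suffices to prove that $L(V)_b H$ is a division algebra. Since $H$ acts faithfully on $L(V)$, the extension $L(V)/E$ with $E := L(V)^H$ is Galois with group $H$, so $L(V)_b H = (L(V)/E, H, b)$ is a central simple $E$-algebra of dimension $|H|^2$. Using $L \cap E = L^H = F$ and $L \cdot E = L(V)$, one obtains $L \otimes_F E = L(V)$, whence $L(V)_b H \cong D \otimes_F E$ as $E$-algebras.

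The heart of the proof will then be to show that the Brauer class $[D] \in \mathrm{Br}(F)$ retains its index $|H|$ under the restriction map $\mathrm{Br}(F) \to \mathrm{Br}(E)$. I would pursue this via a geometric semicontinuity argument: $D_t[V]$ can be viewed as a family of central simple algebras of constant degree $|H|$ over $\mathrm{Spec}\, L[V]^H$ whose generic fiber is $L(V)_b H$ and which specializes at $V = 0$ to the surjection $D_t[V] \twoheadrightarrow D$, $f \otimes d \mapsto f(0)d$, with $D$ a division algebra of degree $|H|$. Lower semicontinuity of the Brauer index in such families then forces the generic fiber to have index $|H|$. An alternative, purely algebraic route would use the Galois correspondence to translate any splitting of $D \otimes_F E$ by a proper intermediate field $E \subset K \subsetneq L(V)$, written as $K = L(V)^{H'}$ for some nontrivial $H' \subset H$, into a partial splitting of $D$ across the compatible subfield $L^{H'} \subset L$ (using $L \cap K = L^{H'}$ and $L \cdot K = L(V)$), contradicting the assumption that $D$ has index $|H|$.

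I expect the main obstacle to be turning one of these strategies into a clean proof: making the semicontinuity argument rigorous requires setting up the family structure precisely and handling the points where the $H$-action is not free, whereas the Galois-correspondence route demands careful Brauer-index bookkeeping to propagate the assumed splitting back from $K$ and $L(V)$ down to $L^{H'}$ and $L$. Once $L(V)_b H$ is confirmed to be a division algebra, the rest is immediate: $D_t[V]$ is a subring of this division algebra and hence a domain; being finite over its Noetherian center $L[V]^H$, it is Ore, and its full localization $D_t(V)$ coincides with $L(V)_b H$, which is therefore the claimed division algebra.
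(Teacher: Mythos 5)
Your reduction is set up correctly: $D_t[V]\cong L[V]_bH$ with the diagonal $H$-action, this embeds in $L(V)_bH=(L(V)/E,H,b)$ with $E=L(V)^H$, and since $L\cap E=F:=L^H$ and $LE=L(V)$ one gets $L(V)_bH\cong D\otimes_F E$. But the entire content of the lemma is now concentrated in the one claim you defer: that the index of $D$ does not drop under the base change $F\to E$, and neither of your two strategies settles this. The ``Galois correspondence'' route is essentially circular: if $D\otimes_FE$ fails to be division, its index is a proper divisor of $|H|$ and it is split by some extension of $E$ of that degree, but there is no reason for that splitting field to be an intermediate field of $L(V)/E$ (already for $H$ of prime order there are no proper intermediate fields, and your argument collapses to the unproved assertion that $E$ itself does not split $D$ --- which is exactly the point at issue for a general extension $E/F$). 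The semicontinuity route can be made to work but is a much heavier machine: you would have to verify that $L[V]_bH$ is Azumaya over $L[V]^H$ (true, because the inertia groups of the $H$-action on $\operatorname{Spec}L[V]$ are trivial --- every residue field contains $L$, on which $H$ acts faithfully, so your worry about ``points where the action is not free'' is unfounded, but this observation still has to be made), that $L[V]^H$ is regular, and that for an Azumaya algebra over a regular integral scheme the index at a specialization divides the index at the generic point. None of this is carried out, so as it stands the proof has a genuine gap precisely at its heart.

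The missing idea, which makes the paper's proof two lines, is Galois descent for the semilinear $H$-module $L\otimes V$: the multiplication map $L\otimes_{F}(L\otimes V)^H\to L\otimes V$ is an isomorphism (check it after base change to the algebraic closure of $F$, where $L$ becomes $\operatorname{Fun}(H,-)$ and both sides have the same dimension). Consequently $L[V]\cong L[x_1,\dots,x_r]$ $H$-equivariantly with $hx_i=x_i$, so in your language $E=F(x_1,\dots,x_r)$ is \emph{purely transcendental} over $F$ --- for which preservation of the index is classical --- and, more directly, $D_t[V]\cong D[x_1,\dots,x_r]$ is a polynomial ring over a division algebra in central variables, hence a domain by the leading-term argument. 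This single observation replaces both of your proposed endgames.
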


\begin{proof} We have a natural $H$-equivariant multiplication map
$$
\psi: L\otimes_{L^H}(L\otimes V)^H\to L\otimes V.
$$
We claim that $\psi$ is an isomorphism. Let $F$ be the algebraic closure of $L^H$, and $V_F:=F\otimes V$. Then, upon tensoring with $F$ over $L^H$, the map $\psi$ becomes the multiplication map ${\rm Fun}(H,F)\otimes_F {\rm Fun}(H,V_F)^H\to {\rm Fun}(H,V_F)$, which is an isomorphism because it is a surjective map between spaces of the same dimension. \smallskip

Regard $(L\otimes V)^H$ as a vector space over $L^H$ (of dimension $r:=\dim V$),
and let $x_1,\ldots,x_r$ be a basis of its dual space. Since $\psi$ is an isomorphism, we have an $H$-equivariant isomorphism
$L[V]=L[x_1,\ldots,x_r]$, where $hx_i=x_i$ for $h\in H$. Hence $D_t[V]=D[x_1,\ldots,x_r]$. This implies the desired statement by using the standard leading term argument.
\end{proof}

\subsection{Proof of Theorem \ref{mainthe}}
We first observe that by Lemmas \ref{enlarge} and \ref{subgroup}, it suffices to prove the theorem for the group ${\rm Fun}(H,\Z_n)^m \rtimes H$. Indeed, if ${\rm Fun}(H,\Z_n)^m \rtimes H$ faithfully grades a central division algebra $D$ of degree $d$, then $G$, which is a subgroup by Lemma \ref{subgroup}, faithfully grades some subalgebra $D'$ of $D.$ But $D'$ is a division algebra of degree dividing $d.$ So by Lemma \ref{enlarge}, $G$ faithfully grades some division algebra $D''$ of degree exactly $d.$ \par \smallskip

We realize $H$ as the Galois group of an extension of fields $L/K$ with $\kk \subset K$. Then $L$ admits a faithful action of $H$. By \cite[Corollary 5.5]{FSS}, $L/K$ can be chosen in such a way that there is a $2$-cocycle $b:H \times H \rightarrow L^{\times}$ such that the crossed product algebra $D:=(L/K,H,b)$ is a finite dimensional division algebra with center $K$. By definition, $D$ is faithfully graded by $H$. Consider the representation $V=(\kk H)^{\oplus m}$
with basis $x_{i,h},$ for $i=1,\ldots,m, h\in H,$ and action $h \cdot x_{i,h'}=x_{i,hh'}.$ By Lemma \ref{division}, $Q:=D_t(V)$ is a division algebra. Moreover, $Z(Q)=L(V)^G$, which implies that the degree of $Q$ is still $d$. We introduce a new grading on $Q$ by the group ${\rm Fun}(H,\Z_n)^m \rtimes H$ by setting $\deg(D_h)=h$ and
${\rm deg}(x_{i,h})=\delta_{i,h}$, the delta function of $h$ in the $i$-th copy of ${\rm Fun}(H,\Z_n)^m.$
This gives a division algebra with a faithful grading by $G$. Together with Lemma \ref{forwdir}, this implies the Theorem \ref{mainthe}. \qed \par \medskip

\begin{remark}\label{brownrem}
Here is another proof of the fact that any finite group $G$ can grade faithfully a central division algebra over $\kk$, not using \cite{FSS}. It was communicated to us by Ken Brown. \par \smallskip

We start with the following result in group theory: \par \smallskip

{\it \cite[Theorem 2]{Hi} Let $F$ be a free group and $N$ a normal subgroup of $F$. Then $F/[N,N]$ is torsion free.} \par \smallskip

Now pick a finite presentation of $G$, i.e., $G=F/N$, where $F$ is a finitely generated free group.
Let $K=F/[N,N]$. Then, $K$ is torsion-free. Also we have an exact sequence
$$
1\to N/[N,N]\to K\to G\to 1,
$$
i.e., $K$ is an extension of a finite group by a free abelian group of finite rank.
So by \cite[Corollary 2 and Section 3]{Br}, $A:=\kk[K]$ is a domain. Note that we have an obvious
$K$-grading on $\kk[K]$, and therefore a $G$-grading by taking the quotient. Let $A_g,g\in G$, be the components of this $G$-grading.
Note that $A_1=\kk[N/[N,N]]$, a ring of Laurent polynomials. Let $L$ be the fraction field of $A_1$. Then $D:=L\otimes_{A_1}A$
is the division algebra of quotients of $A$, and the $G$-grading on $A$ obviously extends to a (faithful) $G$-grading on $D$, as desired.
\end{remark}

\subsection{An example} Let us give an example of a non-abelian grading of a quaternion algebra ($d=2$).
\begin{example}
{\emph Let $D$ be the algebra over $\kk(a,b)$ generated by $x^{\pm 1}$ and $y$ with defining relations
$x+x^{-1}=a, \ y^2=b,$ and $yx=x^{-1}y.$ Set $z=x-x^{-1}.$ Then $D$ can also be generated over $\kk(a,b)$ by $y$ and $z$ with relations $y^2=b,\ z^2=a^2-4,$ and $yz=-zy$. We claim that this quaternion algebra does not split. Otherwise the equation
$(a^2-4)P^2+bQ^2=R^2$ should have nonzero solutions in polynomials of $a,b.$
We may assume that $P,Q,$ and $R$ have no common factors. We see that $P$ and $R$ must vanish when $b=0$ as $a^2-4$ is not a square. So $P=b\bar{P}, R=b\bar{R},$ and $(a^2-4)b\bar{P}^2+Q^2=b\bar{R}^2$. Hence $Q$ is divisible by $b$, which contradicts the assumption that $P,Q,$ and $R$ have no common factors. Therefore $D$ is a division algebra. \smallskip

Let us endow $D$ with a faithful grading by the quaternion group $Q_8$ as follows.
Set $c=a^2-2.$ Over the field $F:=\kk(c,b)$, the algebra $D$ has dimension $8$ and is generated by
$x^{\pm 1}$ and $y$ with relations $x^2+x^{-2}=c,\ y^2=b,$ and $yx=x^{-1}y$. Now put a grading on $F$ by $\{\pm 1\}$ setting $\deg(b)=\deg(c)=-1$. Even functions $(F_+)$ are in degree $1$ and odd functions $(F_-)$ in
degree $-1$. Extend this grading to a $Q_8$-grading on $D$ by setting $\deg(x)=i$ and $\deg(y)=j$. It is easy to check that this is indeed a grading. Namely:
$$\begin{array}{ll}
D[1]=F_+\oplus x^2F_-=:K, & \hspace{1cm} D[-1]=Kx^2=Kb=Kc, \\
D[i]=Kx, & \hspace{1cm} D[-i]=Kx^3, \\
D[j]=Ky, & \hspace{1cm} D[-j]=Kyb=Kyx^2, \\
D[k]=Kxy, & \hspace{1cm} D[-k]=Kxyb.
\end{array}$$
Note that $K$ is a non-central subfield of $D$, and the center $\kk(a,b)$ of $D$
is not graded ($a$ is not homogeneous).}
\end{example}

\section{Faithful group actions on division algebras}

\subsection{The result}
Question 3.5 in \cite{CEW} asks if any inner faithful action of a semisimple Hopf algebra on a division algebra $D$ must be Hopf-Galois. The main result of \cite{EW1} confirms this when $D$ is commutative: it shows that in this case the Hopf action must be a group action, so the result follows from classical Galois theory. The goal of this section is to prove the following theorem, which gives a positive answer to this question for group algebras.

\begin{theorem}\label{galo}
Let $D$ be a division algebra whose center contains $\kk$. Assume that a finite group $G$ acts on $D$.
Then $[D:D^G]$ divides $\vert G \vert$. Moreover, if the action is faithful, then the extension $D/D^G$ is $(\kk G)^*$-Galois.
\end{theorem}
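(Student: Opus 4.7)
The plan is to reduce both claims to the case when $G$ acts faithfully, and to prove the Galois property by identifying the smash product with an endomorphism ring. Let $N$ be the kernel of $G \to \operatorname{Aut}(D)$; since $D^G = D^{G/N}$, once the faithful case gives $[D:D^{G/N}] = |G/N|$, the divisibility $[D:D^G]\mid|G|$ follows. So I may assume $G$ acts faithfully. Setting $B := D^G$ (a division subring of $D$ containing $\kk$), the goal is to show that the canonical $\kk$-algebra map
$$\Phi: E := D \# \kk G \longrightarrow \operatorname{End}(D_B), \qquad \Phi(d \# g)(x) = d\, g(x),$$
is an isomorphism, where $D_B$ denotes $D$ as a right $B$-module. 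This is the standard reformulation of the $(\kk G)^*$-Galois condition for $D/B$, valid in our setting because $D$ will turn out to be a finitely generated free right $B$-module.

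Injectivity of $\Phi$ is the classical Dedekind--Artin independence of distinct automorphisms of a division ring: a relation $\sum_g d_g\, g(x) = 0$ holding for every $x \in D$, with the $g$'s acting as pairwise distinct automorphisms of $D$ (by faithfulness), forces $d_g = 0$ for all $g$. For the dimension bound $[D:B]_R \leq |G|$, I use a rank-and-averaging argument. Suppose $x_1,\ldots,x_{|G|+1} \in D$ were right $B$-linearly independent; the $|G| \times (|G|+1)$ matrix $(g(x_j))$ over the division ring $D$ would admit a nonzero right null vector $c = (c_j) \in D^{|G|+1}$. The null space is stable under the diagonal $G$-action $c \mapsto (h(c_j))_j$ (applying $h$ to the relation $\sum_j (h^{-1}g)(x_j)c_j = 0$ yields $\sum_j g(x_j)\, h(c_j) = 0$) and under right multiplication by $D$. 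Rescaling so that $c_1 = 1$ and replacing $c$ by its $G$-average $\bar c := |G|^{-1}\sum_h h(c)$, available since $\operatorname{char}\kk = 0$, we have $\bar c_1 = 1$ (so $\bar c \neq 0$) and $\bar c_j \in B$ for every $j$. Specialising the null-space condition to $g = 1$ produces the nontrivial right $B$-linear relation $\sum_j x_j\bar c_j = 0$, contradicting independence.

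Viewing $\Phi$ as a map of left $D$-modules between free modules of ranks $|G|$ and $[D:B]_R$, injectivity gives $|G| \leq [D:B]_R$, which together with the previous paragraph forces $[D:B]_R = |G|$; $\Phi$ is then an isomorphism by dimension. Hence $D/B$ is $(\kk G)^*$-Galois, and the reduction of the first paragraph yields $[D:D^G] = |G/N|$ for the general case, which divides $|G|$. The technical heart of the proof is the averaging step: the points to watch are (i) covariance of the null space under the diagonal $G$-action, (ii) the availability of the averaging operator in characteristic zero, and (iii) the normalisation $c_1 = 1$ surviving the averaging, precisely because $1 \in B$ is $G$-fixed.
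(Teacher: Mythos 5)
Your reduction to the faithful case, your reformulation of the Galois condition via $\Phi: D\#\kk G\to \End(D_{D^G})$, and your averaging argument for the bound $[D:D^G]\le|G|$ are all correct (the last is the classical Artin-type estimate and, as you note, only needs $|G|$ invertible in $\kk$). The gap is the step you label ``classical Dedekind--Artin independence of distinct automorphisms of a division ring.'' For division rings this statement is \emph{false}: distinct automorphisms that differ by \emph{inner} automorphisms can be left linearly dependent. Indeed, if each $n$ in a subgroup $N\subseteq G$ acts by conjugation by some $u_n\in D^\times$, then any relation $\sum_n\lambda_n u_n^{-1}=0$ with $\lambda_n\in Z(D)$ gives $\sum_n d_n\,n(x)=\sum_n\lambda_n x u_n^{-1}=0$ for all $x$, where $d_n=\lambda_n u_n^{-1}$ are not all zero. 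The paper's own Example in Section 3.2 (the rotation group of a platonic solid acting on $\mathbb{H}$ by conjugation, where $|G|>[D:D^G]=4$) is exactly such a counterexample: there the $|G|$ distinct automorphisms are left linearly dependent and $\Phi$ is not injective. The correct classical statement (Artin--Jacobson) gives independence only for automorphisms that are pairwise \emph{not} inner-equivalent; it says nothing when $G\cap\mathrm{Inn}(D)\neq 1$, which is precisely the case the theorem must confront.

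Under the standing hypotheses ($\kk=\overline{\kk}$ of characteristic zero with $\kk\subset Z(D)$) the independence \emph{is} true, but it is essentially equivalent to the hard direction of the theorem and requires a genuine argument using those hypotheses; you have supplied none. The paper closes exactly this gap by a different route: it writes $D=\oplus_{V}V\otimes\Hom_G(V,D)$ with multiplicity spaces that are left vector spaces over $Q=D^G$, decomposes $D$ into eigenspaces $D_\chi$ for each cyclic subgroup $\langle a\rangle$ (possible because $\kk$ contains the relevant roots of unity and the action is $\kk$-linear), observes that all $D_\chi$ are nonzero and $Q$-equidimensional via right multiplication by a nonzero homogeneous element, and deduces $\sum_V \mathrm{tr}_V(a)\,\dim_Q\Hom_G(V,D)=0$ for $a\neq 1$, so that $D$ is the regular representation over $Q$ and $\dim_Q D=|G|$. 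To repair your proof you would need an argument of comparable strength establishing $[D:D^G]\ge|G|$ (equivalently, the injectivity of $\Phi$); as written, that step begs the question.
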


\begin{remark} We will see below that the assumption that $\kk$ is algebraically closed is essential here: both Theorem \ref{galo} and the result of \cite{EW1} fail over algebraically non-closed fields.
\end{remark}

\begin{proof}
Set $Q=D^G$ and let ${\rm dim}_Q D$ denote the dimension of $D$ over $D^G$ as a left vector space. By a result of Cohen, Fischman, and Montgomery, see \cite[Theorem 8.3.7]{Mo}, it suffices to show that if the action of $G$ is faithful, then ${\rm dim}_Q D=|G|$. \smallskip

We have $D=\oplus_{V\in {\rm Irrep}(G)}V\otimes {\rm Hom}_G(V,D)$,
and ${\rm Hom}_G(V,D)$ are left (and right) vector spaces over $Q$. It is enough to prove that
$$
{\rm dim}_Q {\rm Hom}(V,D)=\dim V
$$
for each $V$. Then ${\rm dim}_Q D=|G|$. Let us denote this (left) dimension by $d_V$. By \cite[Corollary 2.3]{BCF}, $d_V<\infty$. Fix a nontrivial element $a\in G$ and let $C(a)$ be the cyclic subgroup of $G$ generated by $a$. Then $V=\oplus_{\chi\in C(a)^\vee} V_\chi\otimes \chi$, and $D=\oplus_{\chi\in C(a)^\vee}D_\chi$, where $D_\chi=\oplus_{V\in {\rm Irrep}(G)} V_\chi\otimes {\rm Hom}_G(V,D)$ is the subspace consisting of $x\in D$ such that $ax=\chi(a)x$. It is clear that all $D_\chi$ have the same dimension over $Q$, as right multiplication by any nonzero element of $D_\chi$ gives an isomorphism $D_1\cong D_\chi$ of left $Q$-vector spaces. So, $\sum_{V\in {\rm Irrep}(G)}{\rm dim}(V_\chi)d_V$ is independent on $\chi$. Multiplying this sum by $\chi(a)$ and summing over $\chi$, we thus get that $\sum_{V\in {\rm Irrep}(G)} {\rm tr}_V(a)d_V$ is zero (as the sum of all $n$-th roots of unity is zero for any $n>1$). Thus, the representation $\oplus_V d_VV$ is a multiple of the regular representation of $G$. But $d_{\kk}=1$ by definition, so this is exactly the regular representation, as desired.
\end{proof}

\subsection{A counterexample over the real field and in characteristic $p$.}

\begin{example} Example 4.6 of \cite{Mo1} shows that
Theorem \ref{galo} fails in positive characteristic.
The proof fails because the algebra $\kk [G]$ is not semisimple.
\end{example}

\begin{example}
Theorem \ref{galo} fails if the ground field $\kk$
is not algebraically closed. Indeed, take $D=\Bbb H$ (the algebra of quaternions over $\Bbb R$), and \linebreak $G\subset SU(2)\subset \Bbb H^\times$ the subgroup consisting of rotations of a platonic solid acting by conjugation. Then $|G|>[D:D^G]=4$, so the extension is not Hopf-Galois.
\end{example}

\begin{example}
The following example, which is a variation of \cite[Remark 4.3]{EW1},  shows
that if we work over $\Bbb R$, then a field may admit a coaction of a noncommutative but cocommutative
semisimple Hopf algebra, and such a coaction does not define a Hopf-Galois extension. \smallskip

Consider the group $G=S_3$ (the permutation group of 3 items), and let
$s_{12}, s_{23}\in G$ be the simple transpositions.
Consider the ${\Bbb C}$-antilinear automorphism $\tau: {\Bbb C} G\to {\Bbb C} G$
given by $\tau(s_{12})=s_{23}$, $\tau(s_{23})=s_{12}$, and let
$\mathcal{K}=({\Bbb C} G)^\tau$ be the space of its fixed points.
Then $\mathcal{K}$ is a 6-dimensional noncommutative
cocommutative $\Bbb R$-Hopf algebra, which is a form of ${\Bbb C}G$ over $\Bbb R$. \smallskip

The algebra $A={\Bbb C}[x,y]/(xy)$ can be inner faithfully graded by $G$ by setting ${\rm deg}(x)=s_{12}$ and ${\rm deg}(y)=s_{23}$.
Now let us pass to real forms. Let $x=u+iv$ and $y=u-iv$, then $xy=u^2+v^2$. Let $A_{\Bbb R}$ be the subalgebra of $A$
spanned by the polynomials of $u$ and $v$ with real coefficients; namely, $A_\Bbb R=A^\tau$, where
$\tau: A\to A$ is the antilinear involution defined by $\tau(x)=y$, $\tau(y)=x$.
Then $A_{\Bbb R}=\Bbb R[u,v]/(u^2+v^2)$, so it is an integral domain
(as $u^2+v^2$ is an irreducible polynomial over $\Bbb R$). \smallskip

We have
$$\begin{array}{l}
{\displaystyle \rho(u)= u \otimes \Big(\frac{s_{12}+s_{23}}{2}\Big)+v \otimes \Big(\frac{s_{23}-s_{12}}{2i}\Big),} \vspace{5pt}\\
{\displaystyle \rho(v)= u \otimes \Big(\frac{s_{12}-s_{23}}{2i}\Big) +v \otimes \Big(\frac{s_{12}+s_{23}}{2}\Big).}\vspace{5pt}
\end{array}$$
This means that $\rho$ restricts to a coaction $A_{\Bbb R}\to A_{\Bbb R}\otimes \mathcal{K}$.
This coaction can be extended to the field of quotients $L:=Q(A_{\Bbb R})$, by tensoring with $F:=\Bbb R(u^2)$ over $\Bbb R[u^2]$
(using that $u^2$ is a coinvariant of $\mathcal{K}$). We have $F=L^{\mathcal{K}}$. Thus we get a coaction of a noncommutative semisimple Hopf algebra $\mathcal{K}$ on a field
$L$, and $[L:F]=2$, while $|G|=6$, showing that this extension is not Hopf-Galois.
\end{example}

\section{The degree bound}

\subsection{The largest dimension of an irreducible representation and the PI degree}
Let $\kk$ be an algebraically closed field (of any characteristic), and $A$ an algebra over $\kk$. Let $F$ be an algebraically closed field containing $\kk$. A {\it matrix representation} of $A$ over $F$ is a $\kk$-linear homomorphism $\rho: A\to {\rm Mat}_n(F)$. Note that such a representation extends
naturally to $A\otimes F$. A representation $\rho$ is said to be irreducible if
the only $\rho(A)$-invariant subspaces of $F^n$ are $F^n$ and $0$. By the Density Theorem, this is equivalent to saying that $\rho(A)$ is a spanning set for ${\rm Mat}_n(F)$ over $F$, or $\rho(A\otimes F)={\rm Mat}_n(F)$. Let $d_*(A)$ be the largest dimension of an irreducible matrix representation of $A$. Also, let $d(A)$ be the PI degree of $A$, i.e., the smallest $n$ such that all polynomial identities of $n$ by $n$ matrices are satisfied in $A$. (We agree that $d_*(A)$ and $d(A)$ are equal to $\infty$ when they are not defined). Note that $d_*(A)\le d(A)$. \smallskip

Assume now that $A=\oplus_{s=1}^p A_s$, where $A_s$ is a finite dimensional algebra over a field $Z_s$ containing $\kk$. So $\oplus_s Z_s$ is contained in the center of $A$. In this case, by Schur's lemma, $Z_s$ acts by scalars in every irreducible matrix representation of $A$ (and for all but one $s$ they act by zero). Hence every irreducible matrix representation of $A$ has dimension less or equal than $r^{1/2}$ with $r={\rm max}_s\dim_{Z_s} A_s$. \smallskip

\begin{theorem}\label{theo}
Let $B$ be a $\kk$-subalgebra of $A$. Then $d_*(B)\le d_*(A)$.
\end{theorem}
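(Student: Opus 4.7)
Set $n := d_*(A)$. The plan is to exhibit a polynomial identity $f$ satisfied by $A$ but not by $\mathrm{Mat}_{n+1}(F)$ for any algebraically closed field $F$. Since identities with no constant term descend to subrings, $B$ will inherit $f$, precluding any surjection $B \twoheadrightarrow \mathrm{Mat}_m(F)$ with $m \geq n+1$ (because $\mathrm{Mat}_{n+1}(F)$ sits inside $\mathrm{Mat}_m(F)$ as a non-unital subring). This will force $d_*(B) \leq n$.

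Two structural facts feed into the construction. First, each $A_s$ is finite-dimensional over the field $Z_s$, with left ideals automatically $Z_s$-subspaces, so $A_s$ is left Artinian and $\mathrm{Jac}(A_s)$ is nilpotent; summing over the finitely many indices, $\mathrm{Jac}(A) = \bigoplus_s \mathrm{Jac}(A_s)$ satisfies $\mathrm{Jac}(A)^k = 0$ for some $k$. Second, $A/\mathrm{Jac}(A) = \bigoplus_s A_s/\mathrm{Jac}(A_s)$ is semisimple Artinian, a finite product of simple blocks $\mathrm{Mat}_{n_{s,i}}(D_{s,i})$ with $D_{s,i}$ a division algebra. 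Each block has degree $n_{s,i}\deg(D_{s,i}) \leq n$, so after scalar extension to the algebraic closure of its center it becomes a matrix algebra of size $\leq n$ and satisfies the standard identity $s_{2n}$ by Amitsur--Levitzki. Since identities are preserved by scalar extension (in both directions, via $R \hookrightarrow R \otimes L$), each block already satisfies $s_{2n}$, and so does $A/\mathrm{Jac}(A)$; equivalently, $s_{2n}(a_1, \ldots, a_{2n}) \in \mathrm{Jac}(A)$ for all $a_i \in A$.

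Now define the multilinear polynomial
\[
f := s_{2n}(\vec x^{(1)})\, y_1\, s_{2n}(\vec x^{(2)})\, y_2 \cdots y_{k-1}\, s_{2n}(\vec x^{(k)}).
\]
Every evaluation on $A$ is a product of $k$ elements of the ideal $\mathrm{Jac}(A)$ (the interspersed $y_i$'s keep partial products inside this ideal), hence lies in $\mathrm{Jac}(A)^k = 0$; so $f$ is an identity of $A$. On the other hand, $\mathrm{Mat}_{n+1}(F)$ does not satisfy $s_{2n}$ (Amitsur--Levitzki again), so for each $i$ we may choose a substitution with $M_i := s_{2n}(\ldots) \neq 0$. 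Simplicity of $\mathrm{Mat}_{n+1}(F)$ implies that for nonzero $a, b$ some $y$ satisfies $ayb \neq 0$ (else $aRb = 0$ forces $RaRb = Rb = 0$, hence $b = 0$); iterating, we choose $y_1, \ldots, y_{k-1}$ so that $M_1 y_1 M_2 \cdots M_k \neq 0$. Thus $f$ is not an identity of $\mathrm{Mat}_{n+1}(F)$, and the conclusion follows as indicated in the first paragraph.

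The main subtlety is designing $f$ correctly: the naive candidate $\prod_i s_{2n}(\vec x^{(i)})$ could happen to vanish on $\mathrm{Mat}_{n+1}(F)$ if the values of $s_{2n}$ turn out to be nilpotent of small index, so the bound $d_*(B)\le n$ would be lost. Inserting the free variables $y_i$ and invoking simplicity of $\mathrm{Mat}_{n+1}(F)$ is precisely what ensures $f$ genuinely distinguishes $A$ from $\mathrm{Mat}_{n+1}(F)$.
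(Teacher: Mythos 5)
Your proof is correct, but it handles the key difficulty by a genuinely different mechanism than the paper. The paper extends scalars to $\overline{A}=\oplus_s A_s\otimes_{Z_s}L_s$, sets $J=\mathrm{Jac}(\overline{A})\cap B$, and proves via a density-plus-nilpotency argument that every irreducible matrix representation of $B$ annihilates $J$; it then concludes because $B/J$ embeds in the semisimple algebra $\overline{A}/\mathrm{Jac}(\overline{A})$, whose blocks have size at most $d_*(A)$, so $B/J$ inherits all $d_*(A)\times d_*(A)$ matrix identities. You instead package the radical into a single explicit multilinear identity: since $A/\mathrm{Jac}(A)$ satisfies $s_{2n}$ (Amitsur--Levitzki applied blockwise, plus the fact that $s_{2m}$ an identity implies $s_{2n}$ an identity for $m\le n$) and $\mathrm{Jac}(A)^k=0$, the polynomial $f=s_{2n}(\vec x^{(1)})y_1\cdots y_{k-1}s_{2n}(\vec x^{(k)})$ is an identity of $A$, hence of $B$, while the staircase evaluation and primeness of $\mathrm{Mat}_{n+1}(F)$ show $f$ fails there. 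This trades the paper's representation-theoretic lemma ($\rho(J)=0$) for the hard direction of Amitsur--Levitzki; in exchange your argument needs nothing about irreducible representations beyond the fact that multilinear identities without constant term pass to subrings, central scalar extensions, and homomorphic images. One small point to tighten: an irreducible matrix representation is not a surjection $B\twoheadrightarrow \mathrm{Mat}_m(F)$ but only becomes surjective after extending scalars, so the chain should read ``$f$ is a multilinear identity of $B$, hence of $B\otimes F$, hence of its image $\mathrm{Mat}_m(F)$, hence of the corner subring $\mathrm{Mat}_{n+1}(F)$''; multilinearity of $f$ makes each step valid, so this is a matter of phrasing rather than a gap. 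Likewise your assertion that each Wedderburn block of $A/\mathrm{Jac}(A)$ has degree at most $n$ deserves the one-line justification that extending scalars to the algebraic closure of the block's center produces an irreducible matrix representation of $A$ of exactly that dimension.
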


\begin{proof}
Let $L_s$ be the algebraic closure of $Z_s$ and $\overline{A}:=\oplus_{s=1}^p A_s\otimes_{Z_s} L_s$. Consider $B$ as embedded in $\overline{A}$. Let $I$ denote the Jacobson radical of $\overline{A}$, \smallskip and $J=I\cap B$.

There exists $N$ such that $J^N=0$. So for any $x\in J$ and $a_0,\ldots,a_N\in \overline{A}$ we have $a_0xa_1x \ldots xa_N=0$. Let $\rho$ be an irreducible matrix representation of $B$ of dimension $n$ over a field $F$. Then for any $f_i\in F$, $b_i,b_i'\in B$, $i=1,\ldots,m$, the matrix $y=\sum_{i=1}^m f_i\rho(b_ixb_i')$ satisfies the equation $y^N=0$. By irreducibility of $\rho$, if $\rho(x)\ne 0$, then $y$ can be any element of ${\rm Mat}_n(F)$. Thus, $\rho(x)=0$, and so $\rho$ is pulled back from $B':=B/J$. \smallskip

We have that $B'$ is a subalgebra of $A':=\overline{A}/I$. The latter is a semisimple algebra, with matrix blocks of dimension less or equal than $d_*(A)$. Thus, the matrix algebra identities of degree $d_*(A)$ hold in $A'$, and hence in $B'$ and $B'\otimes F$. But $\rho: B'\otimes F\to {\rm Mat}_n(F)$ is surjective. Hence the matrix algebra identities of degree $d_*(A)$ are satisfied in ${\rm Mat}_n(F)$, which implies \linebreak $n\le d_*(A)$.
\end{proof}

This theorem can be generalized as follows.

\begin{corollary}\label{coro}
Let $R$ be a commutative $\kk$-algebra whose total quotient ring\footnote{The ring obtained by inverting all nonzero divisors of $R$.} $Z$ is a direct sum of finitely many fields, say $Z=\oplus_{i=1}^p Z_s$. Let $A$ be an $R$-algebra which is a finitely generated torsion-free module over $R$. For a $\kk$-subalgebra $B$ of $A$ we have $d_*(B)\le d_*(A)$.
\end{corollary}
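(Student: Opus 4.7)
The plan is to reduce to Theorem \ref{theo} by passing to the localization of $A$ along the nonzero divisors of $R$. Set $\widetilde{A} := A \otimes_R Z = \bigoplus_{s=1}^p A \otimes_R Z_s$. Since $A$ is finitely generated as an $R$-module, each $A \otimes_R Z_s$ is a finite-dimensional algebra over the field $Z_s$, and each $Z_s$ contains $\kk$ (as $R$ is a $\kk$-algebra and $\kk$ is a field). Hence $\widetilde{A}$ fits the hypotheses of Theorem \ref{theo} exactly.

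Next, since $A$ is a torsion-free $R$-module, the canonical map $A \to \widetilde{A}$ is injective, so we may view $B \subseteq A \subseteq \widetilde{A}$ as a chain of $\kk$-subalgebras. Theorem \ref{theo} then yields $d_*(B) \le d_*(\widetilde{A})$.

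It remains to show $d_*(\widetilde{A}) \le d_*(A)$, i.e., that every irreducible matrix representation of $\widetilde{A}$ restricts to an irreducible matrix representation of $A$ of the same dimension. The key point is that $Z$, embedded in $\widetilde{A}$ as $1 \otimes Z$, is central (because $R$ is central in $A$). So if $\rho: \widetilde{A} \to {\rm Mat}_n(F)$ is irreducible, Schur's lemma forces $\rho(Z) \subseteq F \cdot I$. Thus $\rho(a \otimes z) = \lambda(z) \rho(a \otimes 1)$ for some scalar $\lambda(z) \in F$, giving $\rho(\widetilde{A}) \subseteq F \cdot \rho(A)$. By the density theorem, $\rho(\widetilde{A})$ spans ${\rm Mat}_n(F)$ over $F$, hence so does $\rho(A)$; therefore $\rho|_A$ is an irreducible matrix representation of $A$ of dimension $n$, yielding $n \le d_*(A)$. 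Chaining the two inequalities proves the corollary. The only genuine content beyond Theorem \ref{theo} lies in this last step, where the hypothesis that $Z$ splits as a finite product of fields is used to guarantee that the localized center acts by scalars in each irreducible representation; this is the step I expect to be the one requiring the most care, though it is short.
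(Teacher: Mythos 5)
Your proof is correct and follows essentially the same route as the paper: localize to $A\otimes_R Z$, use torsion-freeness to embed $B\subseteq A\subseteq A\otimes_R Z$, apply Theorem \ref{theo}, and observe that irreducible matrix representations of the localization restrict to irreducible matrix representations of $A$ of the same dimension. Your Schur's-lemma/density justification of that last restriction step is exactly the detail the paper leaves implicit (the splitting of $Z$ into fields is really used to put $A\otimes_R Z$ into the setting of Theorem \ref{theo}, while centrality of $Z$ alone suffices for the scalar action), so there is nothing to correct.
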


\begin{proof}
Set $A_{\rm loc}:=A\otimes_R Z$. Since $A$ is torsion-free over $R$, we can view  $A$, and hence $B$, as a subalgebra of $A_{\rm loc}$. Each irreducible matrix representation of $A_{\rm loc}$ is an irreducible matrix representation of $A$, so its dimension is less or equal than $d_*(A)$. Thus the
result follows from Theorem \ref{theo} (indeed, $A_{\rm loc}=\oplus_{s=1}^p A_s$, where $A_s=e_sA$, and $e_s$ are the primitive idempotents of $Z$).
\end{proof}

\subsection{The result}
We keep the hypotheses of Corollary \ref{coro} on $R$ and $A$. Suppose that a finite dimensional Hopf algebra $\mathcal{K}$ over $\kk$ coacts on $A$ so that $A^{\mathcal{K}}\subset A$ is a Hopf-Galois extension. \smallskip

The goal of this section is to prove the following theorem.

\begin{theorem}\label{main}
One has $d(\mathcal{K})\le d(A\otimes A^{\rm op})$ and $d_*(\mathcal{K})\le d_*(A)^2$.
\end{theorem}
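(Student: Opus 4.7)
The natural plan is to use the translation map of the Hopf--Galois extension to realize $\mathcal{K}$ as a subalgebra of an algebra built from $A$, and then to invoke Corollary \ref{coro} together with a standard bound on irreducible representations of tensor products.

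Set $B := A^{\mathcal{K}}$ and let $\beta : A \otimes_B A \to A \otimes \mathcal{K}$, $a \otimes b \mapsto a b_{(0)} \otimes b_{(1)}$, denote the Galois map, which is bijective by hypothesis. I define the translation map $\tau : \mathcal{K} \to A \otimes_B A$ by $\tau(k) := \beta^{-1}(1 \otimes k) = k^{[1]} \otimes k^{[2]}$; it is $\kk$-linear and injective. A short Sweedler-notation computation (using the defining identity $k^{[1]} k^{[2]}_{(0)} \otimes k^{[2]}_{(1)} = 1 \otimes k$ together with the comodule-algebra axiom) shows that $\tau$ becomes an algebra homomorphism once $A \otimes_B A$ is given the natural algebra structure transferred from the tensor-product algebra $A \otimes \mathcal{K}$ via $\beta$. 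Under the same identification $A \otimes_B A$ is a free left $A$-module of rank $\dim \mathcal{K}$, hence finitely generated and torsion-free over $R$.

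Corollary \ref{coro} applied to the inclusion $\mathcal{K} \hookrightarrow A \otimes_B A$ then gives $d_*(\mathcal{K}) \le d_*(A \otimes_B A)$ and $d(\mathcal{K}) \le d(A \otimes_B A)$. To finish, I would bound the latter by $d_*(A)^2$ and $d(A \otimes A^{\mathrm{op}})$ respectively by exploiting the Galois isomorphism $A \otimes_B A \cong A \otimes \mathcal{K}$ together with the following standard fact on tensor-product representations: any irreducible $n$-dimensional representation of $A \otimes A^{\mathrm{op}}$ over an algebraically closed $F$ makes the representing space isotypic as an $A$-module, $F^n \cong W^m$, with $\dim W \le d_*(A)$ and $m \le d_*(A^{\mathrm{op}}) = d_*(A)$, giving $n \le d_*(A)^2$. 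The PI-degree inequality follows from the parallel polynomial-identity argument, since an identity of $A \otimes A^{\mathrm{op}}$ transfers to $A \otimes_B A$ and hence to $\mathcal{K}$.

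The delicate step is this last comparison $d_*(A \otimes_B A) \le d_*(A \otimes A^{\mathrm{op}})$ (and its PI-analog): the naive vector-space surjection $A \otimes A^{\mathrm{op}} \to A \otimes_B A$ with product $(a \otimes b)(c \otimes d) = ac \otimes db$ fails to descend to a well-defined algebra structure on $A \otimes_B A$ when $B$ is not central in $A$, so the reduction cannot be carried out by a naive algebra quotient. Instead, one must argue at the level of matrix representations and polynomial identities, using the Hopf--Galois identification $A \otimes_B A \cong A \otimes \mathcal{K}$ to transfer the tensor-product bound. This bridging is where the main technical work lies.
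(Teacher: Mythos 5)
There is a genuine gap, and you have in fact located it yourself in your last paragraph: the comparison of $A\otimes_B A$ with $A\otimes A^{\rm op}$ is not a ``delicate bridging step'' left to routine work --- it is the entire content of the theorem, and your proposed setup makes it unreachable. The algebra structure you put on $A\otimes_B A$ is by definition the one transported from the tensor-product algebra $A\otimes\mathcal{K}$ along $\beta$, so your embedding $\tau:\mathcal{K}\hookrightarrow A\otimes_B A$ is, up to the isomorphism $\beta$, just the tautological inclusion $k\mapsto 1\otimes k$ into $A\otimes\mathcal{K}$. Applying Corollary \ref{coro} to that inclusion yields $d_*(\mathcal{K})\le d_*(A\otimes\mathcal{K})=d_*(A)\,d_*(\mathcal{K})$, which is circular and gives nothing. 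And, as you correctly observe, there is no algebra surjection $A\otimes A^{\rm op}\to A\otimes_B A$ when $B=A^{\mathcal{K}}$ is not central, so the transported structure cannot be compared with $A\otimes A^{\rm op}$ by a quotient argument; no amount of ``arguing at the level of matrix representations'' fixes this, because the representations of $(A\otimes_B A,\beta^{*}\text{-structure})$ are simply those of $A\otimes\mathcal{K}$ and are not controlled by $d_*(A)^2$.

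The missing idea is to avoid putting any algebra structure on $A\otimes_B A$ at all and instead realize $\mathcal{K}$ inside its \emph{bimodule endomorphism algebra}. The paper notes that $can$ is an $A$-bimodule isomorphism (right $A$-action on $A\otimes\mathcal{K}$ being the diagonal one via the coaction), so $k\mapsto\Phi_k$, $\Phi_k(a\otimes k')=a\otimes kk'$, embeds $\mathcal{K}$ into $\End_{A\text{-}A}(A\otimes_B A)$. Writing $A\otimes_B A=(A\otimes A^{\rm op})/I$ with $I$ the left ideal generated by $a\otimes 1-1\otimes a$, $a\in B$, one has $\End_{A\otimes A^{\rm op}}((A\otimes A^{\rm op})/I)\cong (B'/I)^{\rm op}$ where $B'=\{b: Ib\subset I\}$ is the idealizer --- a genuine $\kk$-subalgebra of $A\otimes A^{\rm op}$. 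Via the antipode $\mathcal{K}\cong\mathcal{K}^{\rm op}$, this exhibits $\mathcal{K}$ as a quotient of a subalgebra $\widetilde{\mathcal{K}}\subset A\otimes A^{\rm op}$; Corollary \ref{coro} applied to $\widetilde{\mathcal{K}}\subset A\otimes A^{\rm op}$ (together with $d_*(A\otimes A^{\rm op})=d_*(A)^2$ and the fact that $d_*$ and $d$ do not increase under quotients) then gives both inequalities. Your translation map and your observation that $A\otimes_B A$ is finitely generated and torsion-free are not wrong, but they do not substitute for this idealizer construction.
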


If $A=D$ is a central division algebra of degree $d$, then $d_*(A)=d$
and $d(A\otimes A^{\rm op})=d^2$, so Theorem
\ref{main} implies

\begin{corollary} If \hspace{1pt} $\mathcal{K}$ coacts on a central division
$\kk$-algebra of degree $d$ defining a Hopf-Galois extension, then the PI degree of $\mathcal{K}$ is at most $d^2$.
\end{corollary}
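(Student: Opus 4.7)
The plan is to apply Theorem \ref{main} directly with $A = D$, the only real work being the computation of $d(D\otimes_\kk D^{\mathrm{op}})$.

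First I would verify that $D$ satisfies the running hypotheses of Corollary \ref{coro} (which Theorem \ref{main} inherits). Take $R:=Z(D)$; since $D$ is a division algebra, $R$ is a field containing $\kk$, so its total quotient ring is $R$ itself, a direct sum of one field. Moreover, $D$ has degree $d$ over $Z(D)$, hence is a free $R$-module of rank $d^2$, and in particular finitely generated and torsion-free over $R$. So the hypotheses apply, and Theorem \ref{main} yields the bound $d(\mathcal{K}) \le d(D \otimes_\kk D^{\mathrm{op}})$.

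The remaining task is to show $d(D \otimes_\kk D^{\mathrm{op}}) = d^2$. On one hand, there is a surjection $D\otimes_\kk D^{\mathrm{op}} \twoheadrightarrow D\otimes_{Z(D)} D^{\mathrm{op}} \cong M_{d^2}(Z(D))$, so the PI degree is at least $d^2$. On the other hand, $D \otimes_\kk D^{\mathrm{op}}$ is a module of rank $d^4$ over the commutative ring $Z(D)\otimes_\kk Z(D)$, and at any closed point of $\operatorname{Spec}(Z(D)\otimes_\kk Z(D))$, given by a pair of embeddings $Z(D)\hookrightarrow \bar\kk$, the fibre is
\[
(D\otimes_{Z(D)} \bar\kk)\otimes_{\bar\kk}(D^{\mathrm{op}}\otimes_{Z(D)}\bar\kk)\cong M_d(\bar\kk)\otimes_{\bar\kk}M_d(\bar\kk)\cong M_{d^2}(\bar\kk),
\]
because $D$ splits over any algebraic closure of its centre. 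Hence every irreducible matrix representation of $D\otimes_\kk D^{\mathrm{op}}$ has dimension at most $d^2$, and the PI degree is exactly $d^2$. Combining this with the previous step gives $d(\mathcal{K})\le d^2$, as desired.

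There is no serious obstacle here, since the statement is framed as an immediate specialisation of Theorem \ref{main}. The only substantive computation is the generic splitting argument above, which is the standard reason why the PI degree is multiplicative under $\kk$-linear tensor products of central simple algebras whose centres contain $\kk$.
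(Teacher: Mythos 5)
Your overall route is exactly the paper's: specialize Theorem \ref{main} to $A=D$, $R=Z(D)$, so that the whole content of the corollary is the inequality $d(D\otimes_\kk D^{\mathrm{op}})\le d^2$ (the paper asserts the equality $d(A\otimes A^{\mathrm{op}})=d^2$ without proof). Your verification of the hypotheses of Corollary \ref{coro} for $D$ is correct, and your lower bound via the surjection onto $D\otimes_{Z(D)}D^{\mathrm{op}}\cong {\rm Mat}_{d^2}(Z(D))$ is correct (though not needed). However, your argument for the upper bound fails at two points. First, if $d>1$ then $Z(D)$ is necessarily transcendental over $\kk$, since the only finite dimensional division algebra over an algebraically closed field is the field itself; consequently there are \emph{no} $\kk$-embeddings $Z(D)\hookrightarrow\overline{\kk}=\kk$ (a $\kk$-algebra map from a field is injective, and an injective $\kk$-algebra map into $\kk$ forces the source to be $\kk$). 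So the set of ``closed points given by pairs of embeddings $Z(D)\hookrightarrow\bar\kk$'' over which you quantify is empty, and the fibre computation establishes nothing. Second, even after repairing the fibres, bounding the dimensions of irreducible matrix representations only bounds $d_*(D\otimes_\kk D^{\mathrm{op}})$, whereas the corollary needs the \emph{PI degree} bound $d(\mathcal{K})\le d(D\otimes_\kk D^{\mathrm{op}})$ from Theorem \ref{main}: since $\mathcal{K}$ is not assumed semisimple, one cannot replace $d$ by $d_*$ (upper triangular $2\times 2$ matrices have $d_*=1$ but PI degree $2$).

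Both defects are cured by passing to the generic point instead of closed points. Because $\kk$ is algebraically closed, the tensor product of two field extensions of $\kk$ is an integral domain; in particular $Z(D)\otimes_\kk Z(D)$ is a domain, with fraction field $F$ and algebraic closure $\overline{F}$. Since $D$ is free of rank $d^2$ over $Z(D)$, the algebra $D\otimes_\kk D^{\mathrm{op}}$ is free, hence torsion-free, over $Z(D)\otimes_\kk Z(D)$, and therefore embeds into
$(D\otimes_{Z(D)}\overline{F})\otimes_{\overline{F}}(D^{\mathrm{op}}\otimes_{Z(D)}\overline{F})\cong {\rm Mat}_{d^2}(\overline{F})$,
where the two copies of $Z(D)$ map to $\overline{F}$ through the two tensor factors and the splitting $D\otimes_{Z(D)}\overline{F}\cong{\rm Mat}_d(\overline{F})$ is the generic splitting you invoked. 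Hence every polynomial identity of $d^2\times d^2$ matrices holds in $D\otimes_\kk D^{\mathrm{op}}$, i.e. $d(D\otimes_\kk D^{\mathrm{op}})\le d^2$, and Theorem \ref{main} yields $d(\mathcal{K})\le d^2$.
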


This gives a positive answer to \cite[Question 5.9]{EW1} for Hopf-Galois extensions. \smallskip

Note that it is shown in \cite{EW1} that this bound is sharp.

\subsection{Proof of Theorem \ref{main}}
We first show that $\mathcal{K} \subset B/I$, where $B$ is a subalgebra of $A\otimes A^{\rm op}$, and $I\subset B$ is an ideal. This will immediately imply the first statement. \smallskip

By hypothesis, the canonical map
$$can:A \otimes_{A^{co \mathcal{K}}} A \rightarrow A\otimes \mathcal{K}, \, a \otimes a' \mapsto aa'_{(0)} \otimes a'_{(1)}$$
is an isomorphism. It is easy to check that it is an homomorphism of \linebreak $A$-bimodules ($A$ acts on $A \otimes \mathcal{K}$ on the left via multiplication in the first component and on the right diagonally via the coaction). Identify $\End_{A-A}(A \otimes \mathcal{K})$ with $\End_{A-A}(A \otimes_{A^{co \mathcal{K}}} A).$ The map $\Phi:\mathcal{K} \rightarrow \End_{A-A}(A \otimes_{A^{co \mathcal{K}}} A), k \mapsto \Phi_k$, with $\Phi_k(a \otimes k')=a \otimes kk'$ for all $a \in A, k' \in \mathcal{K}$, is an injective algebra homomorphism. Let $I$ be the left ideal of $A \otimes A^{op}$ generated by the set \linebreak $\{a \otimes 1-1\otimes a : a \in A^{co \mathcal{K}}\}.$ Then $A \otimes_{A^{co \mathcal{K}}} A=(A \otimes A^{op})/I$ as $A$-bimodules. The right $A$-action on $A \otimes A^{op}$ is $(x \otimes y)\cdot c=x \otimes yc$. Notice that $I \cdot c \subset I$. This gives the right $A$-action on $(A \otimes A^{op})/I$. Consider now the $\kk$-subalgebra $B:=\{b \in A\otimes A^{op} : I b\subset I\}$. Clearly $I \subset B$ and $I$ is a 2-sided ideal of $B$. Moreover,
$\End_{A-A}((A \otimes A^{op})/I)=\End_{A \otimes A^{op}}((A \otimes A^{op})/I) \cong (B/I)^{op}$. Then we can view $\mathcal{K}^{op}$ as embedded in $B/I$, and hence $\mathcal{K}\subset B/I$ (as $\mathcal{K}\cong \mathcal{K}^{op}$ via the antipode). \smallskip

To prove the second statement, consider the algebra $\widetilde{\mathcal{K}}\subset B$, which is the preimage of $\mathcal{K}$ in $B/I$. We have $\widetilde{\mathcal{K}}\subset A\otimes A^{\rm op}$, and the algebra $A\otimes A^{\rm op}$ satisfies the conditions of Corollary \ref{coro} (as so does $A$). Also, $d_*(A\otimes A^{\rm op})=d_*(A)^2$ (as irreducible matrix representations of $A\otimes A^{\rm op}$ are $\rho_1\otimes \rho_2^*$, where $\rho_i$ are irreducible matrix representations of
$A$). So by Corollary \ref{coro}, we get that $d_*(\widetilde{\mathcal{K}})\le d_*(A)^2$. Hence $d_*(\mathcal{K})\le d_*(A)^2$.

\section{Actions of twisted group algebras on division algebras}

Let $G$ be a finite group, $A$ a finite abelian group with a $G$-action, and $\pi: G\to A$ a bijective 1-cocycle. Recall from \cite[Section 4]{EG} that we have a twist $J$ for the group $\Gamma:=A^\vee \rtimes G$ given by $J=\sum_{g\in G}g\otimes 1_{\pi(g)}.$

Our goal is to prove the following.

\begin{theorem}\label{mintri} The minimal triangular Hopf algebra $\mathcal{H}:=\kk[\Gamma]^J$ acts faithfully (hence inner faithfully) on a central division algebra of degree $|G|$.
\end{theorem}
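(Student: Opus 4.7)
The plan is to realize the $\mathcal{H}$-action by twisting a $\kk[\Gamma]$-action on a central division algebra of degree $|G|$, using the standard twisting of module algebras by a Drinfeld twist.

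First, I would produce a central division algebra $D$ of degree $|G|$ (with center containing $\kk$) on which $\Gamma=A^\vee\rtimes G$ acts faithfully by $\kk$-algebra automorphisms. Since $A^\vee$ is a normal abelian subgroup of $\Gamma$ of index $|G|$, Theorem \ref{mainthe} furnishes a $\Gamma$-graded central division algebra $D_0$ of degree $|G|$, but this is a coaction of $\kk[\Gamma]$ rather than an action. To upgrade the grading to a $\Gamma$-action, I would adapt the construction in the proof of Theorem \ref{mainthe}: pick a faithful representation $V$ of $\Gamma$ and form a twisted tensor product of $D_0$ with the rational function field $\kk(V)$, in the spirit of $D_t[V]$ from Lemma \ref{division}, then pass to the fraction division algebra. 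The result is a central division algebra $D$ of degree $|G|$ carrying a faithful $\Gamma$-action through the action on the $\kk(V)$-factor.

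Second, I would apply the standard Majid twisting of module algebras. Using $J=\sum_{c\in A}\pi^{-1}(c)\otimes 1_c$, define a new multiplication on the underlying vector space of $D$ by
\[
a\ast_J b \;=\; \sum_{c\in A}\bigl(\pi^{-1}(c)^{-1}\cdot a\bigr)\bigl(1_c\cdot b\bigr),
\]
where $\cdot$ denotes the $\Gamma$-action on $D$. The standard twisting lemma then guarantees that $(D,\ast_J)$ is an $\mathcal{H}=\kk[\Gamma]^J$-module algebra, with the same underlying $\kk$-linear $\Gamma$-action.

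Finally, I would verify that $(D,\ast_J)$ remains a central division algebra of degree $|G|$ with center containing $\kk$, and that the $\mathcal{H}$-action is faithful. Faithfulness is automatic: since $\mathcal{H}$ shares its underlying algebra with $\kk[\Gamma]$ and the $\Gamma$-action on $D$ is faithful, the resulting map $\mathcal{H}\to\mathrm{End}_\kk(D)$ is injective. The main technical obstacle is confirming that $(D,\ast_J)$ is still a division algebra of the correct degree: the twist reshuffles the $A^\vee$-isotypic decomposition $D=\bigoplus_{c\in A}D_c$ in a controlled way determined by $\pi$, so I expect a filtration or leading-term argument analogous to that of Lemma \ref{division}, combined with the domain property of $D$, to establish the division property. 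The center and PI-degree are preserved because the underlying vector space and its $\Gamma$-invariant subspaces are unchanged by the twist.
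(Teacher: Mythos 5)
There is a genuine gap, and it sits exactly where you placed your hopes: the verification that the twisted algebra is a central division algebra of degree $|G|$ is not a technicality to be deferred but is the entire content of the theorem. Your closing claim that ``the center and PI-degree are preserved because the underlying vector space and its $\Gamma$-invariant subspaces are unchanged by the twist'' is false: twisting changes the multiplication, hence the center. A commutative field with a faithful $\Gamma$-action twists to a noncommutative algebra whose center is (roughly) the invariants, so the PI degree \emph{grows} under twisting --- indeed that growth is the only source of noncommutativity in the construction. This also shows that starting from the graded division algebra $D_0$ of degree $|G|$ is counterproductive: if $\Gamma$ acts only through the $\kk(V)$-factor, then $D_0$ commutes with the twisted factor $\kk(V)_J$ inside $(D,\ast_J)$, and since $\kk(V)_J$ itself already has degree $|G|$ over its center (as $|\Gamma|=|A||G|=|G|^2$), you would end up with degree $|G|^2$, not $|G|$. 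The correct starting point is a \emph{commutative} polynomial algebra carrying a faithful linear $\Gamma$-action, with no $D_0$ at all.

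The remaining, and central, difficulty is proving that the twisted polynomial algebra is a domain. A leading-term argument of the type in Lemma \ref{division} does not apply: the twist by $J$ preserves the polynomial grading (it is a graded deformation, not a filtered one), so the top-degree component of a product $a\ast_J b$ can a priori vanish. The paper circumvents this by exploiting the specific shape of $J$: for $X$ a finite $G$-set with an equivariant map $\phi:X\to A$, the twisted algebra $\kk[b_x,\,x\in X]_J$ is exactly the quadratic algebra $B$ attached to the set-theoretic Yang--Baxter solution on $X$, and $B$ is a domain by Gateva-Ivanova--Van den Bergh (alternatively, $B$ embeds in the group algebra of the structure group $G_X$, which is torsion-free with an abelian subgroup of finite index, so $\kk[G_X]$ is a domain by Brown's theorem). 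One then localizes, and checks faithfulness and the degree count by choosing $X$ with faithful $G$-action and $\phi(X)$ generating $A$. Zhang's argument in Remark \ref{zhangrem} gives the alternative route your outline gestures at --- for an arbitrary twist one shows $(SV)_J$ is a connected graded FBN algebra of finite global dimension and invokes Stafford--Zhang to conclude it is a domain --- but either way a substantive external input is required; it cannot be replaced by a filtration argument on $D$.
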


\begin{proof}  Let $X$ be a finite set with a $G$-action and a $G$-equivariant map $\phi: X\to A$.
Then $X$ has the structure of a nondegenerate symmetric set; see \cite{ESS} (after Definition 2.3). Thus, we have a set-theoretical Yang-Baxter solution
$R: X^2\to X^2$. Consider the algebra $B$ with generators $b_x$, $x\in X$, and relations
$$
b_xb_y=b_{y'}b_{x'}
$$
with $R(x,y)=(x',y').$

\begin{lemma}\label{doma} $B$ is an integral domain.
\end{lemma}

\begin{proof} This follows from \cite[Corollary 1.6]{GV}.
Also, here is another proof. The algebra $B$ is contained in the group algebra ${\kk}[G_X]$ of the structure group $G_X$ of $X$ (namely, ${\kk}[G_X]$
is defined in the same way, but also adding the inverses of $b_x$). It is shown in
\cite{ESS} that the group $G_X$ is equipped with a bijective 1-cocycle
$\psi: G_X\to \Bbb Z^X$
and has a normal abelian subgroup of finite index, which is
free of finite rank. Also, it is shown in \cite{Ch} that the group $G_X$ is torsion-free.
Thus, it follows that $B$ is an integral domain, as the group algebra of a torsion-free group with an abelian subgroup of finite index is an integral domain; see \cite[Corollary 2 and Section 3]{Br}.
\end{proof}

Also, it is clear that $B={\kk}[b_X, x\in X]_J$, the twisted polynomial algebra.
(Note that $A^\vee \rtimes G$ acts on the ${\kk}$-span ${\kk}X$ of $X$ via
$gb_x=b_{gx}$, $g\in G$, and $fb_x=f(\phi(x))b_x$, $f\in A^\vee$).
Thus, the Hopf algebra $\mathcal{H}$ acts on $B$. Hence $\mathcal{H}$ acts on $Q_B$, the full localization of $B$, which is a division algebra by Lemma \ref{doma}.

Let us pick $X$ in such a way that the action of $G$ on $X$ is faithful, and $\phi(X)$ generates $A$.
For example, we may take $X=G\times S$, where $S$ is a set together with $\phi: S\to A$ such that $\phi(S)$ is a generating set for $A$ as a $G$-module,
define the action of $\Gamma$ on $X$ by $h(g,x)=(hg,x)$, and set $\phi(g,x)=g\phi(x)$.
Then the action of $A^\vee \rtimes G$ on ${\kk} X$ is faithful.
Thus, $\mathcal{H}$ acts inner faithfully on $B$ and $Q_B$, and $Q_B$ is a central division algebra of degree $|G|$, as desired.
\end{proof}

We see that as a $\Gamma$-module, ${\kk}X={\rm Ind}_{A^\vee}^{A^\vee \rtimes G}{\rm Fun}(S,\kk)$. We can take $S=A$, $\phi|_S={\rm Id}$,
then ${\rm Fun}(S)={\kk}A^\vee$ (via the Fourier map), so $X=\Gamma$ and ${\kk}X$ is ${\kk}\Gamma$, the regular representation of $\Gamma$. Similarly, we can take $X=\Gamma\times
\lbrace{1,\ldots,m\rbrace}$,
so ${\kk}X =({\kk}\Gamma)^{\oplus m}$. This gives rise to the following
generalization.

\begin{corollary} Let $K$ be any finite group containing $\Gamma$. Then ${\kk}[K]^J$ acts on a central division algebra of degree $|G|$ so that the action is Hopf-Galois.
\end{corollary}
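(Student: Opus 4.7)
The plan is to adapt the construction of Theorem~\ref{mintri} by using, in place of the $\Gamma$-module $\kk X$ there, a $K$-module whose restriction to $\Gamma$ is of the form $\kk X$ treated in Theorem~\ref{mintri}. The crucial observation is that $J\in\kk[\Gamma]\otimes\kk[\Gamma]\subset\kk[K]\otimes\kk[K]$: the $\kk$-algebra structure of the twisted polynomial algebra is determined only by the $\Gamma$-module structure, while the $K$-module structure promotes it to a $\kk[K]^J$-module algebra. The Hopf-Galois property will then be extracted by reducing to a classical Galois field extension and using the fact that cocycle twisting preserves Hopf-Galois extensions.

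Concretely, I would take $V:=\kk[K]$, the regular representation of $K$. Since $\kk[K]=\Ind_\Gamma^K\kk[\Gamma]$, we have $V|_\Gamma\cong\kk[\Gamma]^{\oplus[K:\Gamma]}$, and by the Fourier-duality discussion preceding the corollary this agrees with $\kk X'$ for the $\Gamma$-set $X':=\Gamma\times\{1,\dots,[K:\Gamma]\}$ equipped with a suitable $G$-equivariant $\phi\colon X'\to A$ (with $G$ acting faithfully on $X'$ and $\phi(X')$ generating $A$). Set $B:=\operatorname{Sym}(V)^J$, the twisted polynomial algebra. Since $J$ lies in $\kk[\Gamma]\otimes\kk[\Gamma]$, as a $\kk$-algebra $B$ coincides with the twisted polynomial algebra produced in the proof of Theorem~\ref{mintri} for this $X'$; hence by Lemma~\ref{doma}, $B$ is an integral domain, and its Ore division algebra of fractions $Q_B$ is central of degree $|G|$. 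The $K$-module structure on $V$ upgrades $B$ (and $Q_B$) to a $\kk[K]^J$-module algebra, and since $V=\kk[K]$ is $K$-faithful the action of $\kk[K]^J$ on $Q_B$ is inner faithful.

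For the Hopf-Galois property I would reduce to the untwisted commutative setting. Consider the commutative polynomial ring $\operatorname{Sym}(V)$ with its $K$-action, and let $L:=\kk(V)$ be its field of fractions; then $K$ acts faithfully on $L$, so by Theorem~\ref{galo} (or classical Galois theory) $L^K\subset L$ is $(\kk K)^*$-Galois. Dualizing, $J$ corresponds to a Hopf $2$-cocycle $\sigma$ on $(\kk K)^*$ with $(\kk[K]^J)^*=((\kk K)^*)^\sigma$, and the standard fact that cocycle twists preserve Hopf-Galois extensions implies that $L^\sigma$ is $(\kk[K]^J)^*$-Galois over $L^K$. Identifying $L^\sigma\cong Q_B$ as $\kk[K]^J$-module algebras (because cotwisting commutes with Ore localization at the nonzero elements of $\operatorname{Sym}(V)$) then yields the desired Hopf-Galois extension $Q_B^K\subset Q_B$. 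The main obstacle is this last step: the precise invocation of the cotwisting-preserves-Galois principle for our $\sigma$ and the interchange of cotwist and Ore localization both require care. A self-contained alternative is to combine the dimension count $[Q_B:Q_B^K]=|K|$ (immediate from Theorem~\ref{galo} applied to the underlying $K$-action, since $\kk[K]^J$ and $\kk[K]$ have the same invariants) with a Cohen-Fischman-Montgomery-type criterion \cite[Theorem~8.3.7]{Mo} characterising Hopf-Galois extensions by a dimension match.
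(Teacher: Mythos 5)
Your proposal is correct and follows essentially the same route as the paper: take the regular representation of $K$, observe that its restriction to $\Gamma$ is a multiple of the regular representation so that the twisted polynomial algebra falls under the construction of Theorem~\ref{mintri} (giving a central division algebra of quotients of degree $|G|$), and deduce the Hopf-Galois property from ordinary Galois theory because the invariants do not change under twisting. Your elaboration of that last step (via cotwisting preserving Hopf-Galois extensions, or via the dimension count together with the Cohen--Fischman--Montgomery criterion) is more detailed than, but consistent with, the paper's one-line justification.
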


\begin{proof} As a $\Gamma$-module, ${\kk}[K]$ is a multiple of the regular representation. So setting ${\mathcal{R}}=\kk[b_x, x\in K]$,
with $gb_x=b_{gx}, g\in K$, and taking $B={\mathcal{R}}_J$, we get an action of $\kk [K]^J$ on $Q_B$, which is as required. The fact that the action is Hopf-Galois is clear from ordinary Galois theory, since the invariants and their action do not change under twisting.
\end{proof}
\smallskip

\begin{remark}\label{zhangrem}
After this paper was finished, James Zhang explained to us that if $\Gamma$ is a finite group and $J$ is a twist in $\kk[\Gamma]^{\otimes 2}$, then $\kk[\Gamma]^J$ acts inner faithfully on a central division algebra,
of degree $d:=|H|^{1/2}$, where $H\subset \Gamma$ is the support subgroup of $J$ (i.e., a subgroup of minimal order such that $J$ is conjugate to a twist in $\kk[H]^{\otimes 2}$). This is a generalization of Theorem \ref{mintri}. Zhang's proof is as follows. \par \smallskip

Let $V$ be a faithful finite dimensional representation of $\Gamma$. Then $\Gamma$ acts faithfully on the symmetric algebra $SV$. So $\kk[\Gamma]^J$ acts on $(SV)_J$, the algebra
$SV$ with multiplication twisted by $J$. First, we show that the global dimension of $(SV)_J$ equals $\dim V$. Indeed, it is well known that the smash product algebra $SV\#\kk[\Gamma]$ is isomorphic to $(SV)_J\# \kk[\Gamma]^J$, so we have
$$
{\rm gldim} \, (SV)_J={\rm gldim}\,  (SV)_J\# \kk[\Gamma]^J={\rm gldim}\,  SV\# \kk[\Gamma]= \dim V,
$$
where the first equality holds, for example, by \cite[Theorem 1.1]{Li}. Also, $(SV)_J$ is a PI algebra, since $(SV)^\Gamma$ is contained in the center of $(SV)_J$, and by the Hilbert-Noether Theorem $(SV)_J$ is a finite module over its center. Hence $(SV)_J$ is Noetherian. But a Noetherian PI algebra is FBN by \cite[Corollary 13.6.6(iii)]{MR}. Thus, $(SV)_J$ is a connected $\Bbb Z_+$-graded FBN algebra of finite global dimension. By \cite[Corollary 1.2]{SZ}, any such algebra must be a domain. Hence $(SV)_J$ is a domain. Using \cite[Theorem 2.2]{SV}, we can extend the action of $\kk[\Gamma]^J$ to the central division algebra $Q$ of quotients of $(SV)_J$. Moreover, this action is faithful, hence inner faithful. Also, it is easy to show that the degree of $Q$ is $d$, as desired.
\end{remark}
\smallskip

\subsection*{Acknowledgments}
The authors are grateful to E. Aljadeff and C. Walton for many useful discussions, and to K. Brown and J. Zhang for providing the arguments in Remarks \ref{brownrem} and \ref{zhangrem}. \par \smallskip

The work of P. Etingof was partially supported by the NSF grant DMS-1000113. J. Cuadra was supported by grant MTM2014-54439 from MINECO and FEDER and by the research group FQM0211 from Junta de Andaluc\'{\i}a. This work was mainly done during his visit to the Mathematics department of MIT. He is deeply grateful for the generous hospitality and inspiring atmosphere. This visit was financed through grant PRX14/00283 from the Spanish Programme of Mobility of Researchers.


\begin{thebibliography}{WWA}

\bibitem[AHN]{AHN} E. Aljadeff, D. Haile and M. Natapov, Projective bases of division algebras and groups of central type, Israel J. Math. 146 (2005), 317-335.

\bibitem[Am]{Am} S.A. Amitsur, On central division algebras. Israel J. Math 12 (1972), 408-420.

\bibitem[Ar]{Ar} V. Artamonov, Actions of Pointed Hopf Algebras on Quantum Torus,
 Ann. Univ. Ferrara - Sez. VII - Sc. Mat. Vol. LI (2005), 29-60.

\bibitem[BSZ]{BSZ} Yu. A. Bahturin, M. V. Zaicev and S. K. Sehgal, Finite-dimensional simple graded algebras, Sbornik: Mathematics 199:7 (2008), 965-983. ArXiv:math/0512202.

\bibitem[BCF]{BCF} J. Bergen, M. Cohen, and D. Fischman, Irreducible actions and faithful actions of Hopf algebras, Israel J. Math. 72 (1990), no. 1-2, 5-18.

\bibitem[Br]{Br} K. A. Brown, On zero divisors in group rings. Bull. London Math. Soc. 8 (1976), 251-256.

\bibitem[Ch]{Ch} F. Chouraqui, "Set-theoretical" solutions of the quantum Yang-Baxter equation and a class of Garside groups. ArXiv:0811.4064.

\bibitem[CEW]{CEW} J. Cuadra, P. Etingof, C. Walton, Semisimple Hopf actions on Weyl algebras, Adv. Math. 282 (2015), 47-55. ArXiv:1409.1644.

\bibitem[EG]{EG} P. Etingof and S. Gelaki, A method of construction of finite-dimensional triangular semisimple Hopf algebras, Math. Res. Lett. 5 (1998), 551-561.

\bibitem[ESS]{ESS} P. Etingof, T. Schedler and A. Soloviev, Set-theoretical solutions of the quantum Yang-Baxter equation, Duke Math. J. 100 (1999), 169-209.

\bibitem[EW1]{EW1} P. Etingof, C. Walton, Semisimple Hopf actions on commutative domains, Adv. Math. 251 (2014), 47-61.

\bibitem[EW2]{EW2} P. Etingof, C. Walton, Pointed Hopf actions on fields, I, arXiv:1403.4673, accepted in Transf. Groups.

\bibitem[FSS]{FSS} F. Fein, D.J. Saltman and M. Schacher, Crossed products over rational function fields. J. Algebra 156 (1993), 454-493.

\bibitem[Hi]{Hi} G. Higman, Finite groups having isomorphic images in every finite group of which they are homomorphic images, Quart. J. Math. Oxford. Ser. (2) 6 (1955), 250-254.

\bibitem[Li]{Li} G. Liu, A note on the global dimension of smash products,
Comm. Algebra 33 (2005), 2625-2627.

\bibitem[GV]{GV} T. Gateva-Ivanova and M. Van den Bergh, Semigroups of I-type, J. Algebra 206 (1998), 97-112. ArXiv:math/0308071.

\bibitem[Mo]{Mo} S. Montgomery, Hopf algebras and their actions on rings. {\it CBMS Regional Conference Series in Mathematics} 82. Amer. Math. Soc., Providence, RI, 1993.

\bibitem[Mo1]{Mo1} S. Montgomery, Hopf Galois theory: A survey. Geometry and Topology Monographs
16 (2009) 367-400.

\bibitem[MR]{MR} J.C. McConnell and J.C. Robson, Noncommutative Noetherian rings. {\it Graduate Studies in Mathematics} 30, Amer. Math. Soc., Providence, RI, 2001.

\bibitem[NO]{NO} C. N\u{a}st\u{a}sescu and F. Van Oystaeyen, Graded ring theory, North-Holland Mathematical Library, 28, North-Holland Publishing Co., Amsterdam-New York, 1982.

\bibitem[RS]{RS} L. Rowen, D. Saltman, Tensor products of division algebras and fields. J. Algebra 394 (2013), 296-309.

\bibitem[SV]{SV} S. Skryabin and F. Van Oystaeyen, The Goldie Theorem for $H$-semiprime algebras, J. Algebra 305 (2006), 292-320.

\bibitem[SZ]{SZ} J.T. Stafford and J.J. Zhang, Homological properties of
(graded) Noetherian PI rings. J. Algebra 168 (1994), 988-1026.
\end{thebibliography}
\end{document}